\newtheorem{theorem}{Theorem}[section]
\newtheorem{lemma}[theorem]{Lemma}
\newtheorem{proposition}{Proposition}[section]
\theoremstyle{definition}
\newtheorem{definition}[theorem]{Definition}
\newtheorem{corollary}[theorem]{Corollary}
\newtheorem{conjecture}[theorem]{Conjecture}
\newtheorem{example}[theorem]{Example}
\theoremstyle{remark}
\newtheorem{remark}[theorem]{Remark}
\numberwithin{equation}{section}
\begin{document}

\title{Operations on the Secondary  Hochschild  Cohomology}
%    Information for first author
\author{Mihai D. Staic}
%    Address of record for the research reported here
\address{
Department of Mathematics and Statistics, Bowling Green State University, Bowling Green, OH 43403 }
\address{Institute of Mathematics of the Romanian Academy, PO.BOX 1-764, RO-70700 Bu\-cha\-rest, Romania.}
\thanks{This work was partially supported by a grant of the Romanian National Authority for Scientific Research, CNCS-UEFISCDI, project number PN-II-ID-PCE-2011-3-0635, contract nr. 253/5.10.2011.}
%    Current address
%\curraddr{}
\email{mstaic@gmail.com}

%    \thanks will become a 1st page footnote.
%\thanks{The first author was supported in part by NSF Grant \#000000.}

%    Information for second author
\author{Alin Stancu}
\address{Department of Mathematics, Columbus State  University, Columbus, GA 31907}
\email{stancu\_alin1@columbusstate.edu}

%    General info
\subjclass[2010]{Primary  16E40, Secondary  16S80}
\date{January 1, 1994 and, in revised form, June 22, 1994.}

%\dedicatory{I thank   }

\keywords{Hochschild  cohomology, deformation of algebras}

\begin{abstract} We show that the secondary Hochschild cohomology  associated to a triple $(A,B,\varepsilon)$ has several of the properties of the usual Hochschild cohomology. Among others, we prove the existence of the cup and Lie products, discuss the connection with extensions of $B$-algebras, and give a Hodge type decomposition of the secondary Hochschild cohomology.

\end{abstract}

\maketitle

%\section*{This is an unnumbered first-level section head}

%
%%%%%%%%%%%%%%%%%%%%%%%%%%%%%%%%%%%%%%%%%%%%%%%%%%%

%%%%%%%%%%%%%%%%%%%%%%%%%%%%%%%%%%%%%%%%%%%%%%%%%%%%
\section*{Introduction}

Hochschild  cohomology was introduced by Hochschild in \cite{h} in order to study extensions of associative algebras over a field and to characterize the separability of this class of algebras. In the same paper (written while he was a draftee serving in the army) he defined for any associative algebra $A$ the cup product of cochains with coefficients in $A$. From Hochschild's definition it follows easily that the cup product on the cochains  descends  to one on the Hochschild cohomology $H^\bullet(A, A)$. Almost twenty years later Gerstenhaber  proved in \cite{g1} that at the cohomology level the cup product is graded commutative. He also defined a Lie product whose properties, when combined with those of the cup product, determine on $H^\bullet(A, A)$ a rich algebraic structure which is now called a Gerstenhaber algebra (or $G$-algebra). $G$-algebra structures appear in other contexts of which we mention here the exterior algebra of a Lie algebra, the differential forms on a Poisson manifold, and the Hochschild cohomology of presheaves of algebras. In this paper we show that on the secondary Hochschild cohomology we can define a cup product and a Lie product, which naturally extend those on the Hochschild cohomology.

Consider a $B$-algebra $A$ determined by the $k$-algebra homomorphism $\varepsilon:B\to A$. The secondary Hochschild cohomology $H^\bullet((A,B,\varepsilon);A)$ was introduced in \cite{sta} in order to study  the $B$-algebra structures on $A[[t]]$. It was proved there that a $B$-algebra structure on $A[[t]]$ is determined by a family of products $m_{\alpha}:A[[t]]\otimes A[[t]]\to A[[t]]$ that must satisfy a generalized associativity condition.  For $a$, $b\in A$ and $\alpha\in B$ we have $m_{\alpha}(a\otimes b)=\varepsilon(\alpha)ab+c_1(a\otimes b\otimes \alpha)t+...$. Just like in the case of deformations of algebras, $c_1$ is a 2-cocycle that gives the deformation $mod \;t^2$. Its class $c_1\in H^2((A,B,\varepsilon);A)$ is determined by the isomorphism class of the $B$-algebra $A[[t]]$.   Moreover, if we assume that $m_{\alpha}$ is  associative $mod\; t^{n+1}$ then the obstruction to extend it to an associative product $mod\;t^{n+2}$ is the vanishing of the element $c_1\circ c_n+c_2\circ c_{n-1}+...+c_n\circ c_1$ in $H^3((A,B,\varepsilon);A)$.

The paper is organized in five sections. In the first section we define the secondary Hochschild cohomology. In the second we introduce the cup and Lie products for the secondary cohomology and then prove some of their properties. In the third section we discuss the connection between extensions of $B$-algebras $0\to M\to X\to A\to 0$ with $M^2=0$ and $H^2((A,B,\varepsilon);M)$. In the forth  we give a Hodge type decomposition, in characteristic 0, for the secondary cohomology, one  that it is consistent with the Hodge decomposition of the usual Hochschild cohomology. Finally, in the fifth section we investigate the (cup and bracket preserving) natural map $\Phi:H^n((A,B,\varepsilon);A)\to H^n(A,A)$. More precisely, we present examples which show that in general $\Phi$ is neither surjective nor injective. Our examples deal with subalgebras of the ring of polynomials. We show that requiring $\Phi_2$ to be injective is equivalent to the Jacobian problem stated in \cite{W}, a question first posed by Ott-Heinrich Keller in 1939.

\section{Preliminaries}
\subsection{Hochschild  Cohomology of an algebra $A$}

In this paper $k$ is a field, $\otimes=\otimes_k$, and all $k$-algebras have a multiplicative unit. We recall from \cite{g2}, \cite{gs} and \cite{lo} the definition of  the Hochschild cohomology.

 Suppose that $A$ is an associative $k$-algebra (not necessarily commutative), and $M$ is an $A$-bimodule. Define  $C^n(A, M)=Hom_k(A^{\otimes n}, M)$ and $\delta_n:C^n(A,M)\to C^{n+1}(A,M)$ determined by:
\begin{center}$\displaystyle\delta_n(f)(a_1\otimes a_2\otimes ...\otimes a_{n+1})=a_1f(a_2\otimes ...\otimes a_{n+1})+\sum_{i=1}^n(-1)^{i+1}f(a_1\otimes ...\otimes a_{i}a_{i+1}\otimes  ...\otimes a_{n+1})+
(-1)^{n+2}f(a_1\otimes ...\otimes a_{n})a_{n+1}.$
\end{center}
One can show that $\delta_{n+1}\delta_n=0$.
The homology of this complex is denoted by $H^n(A, M)$ and is called the Hochschild cohomology of $A$ with coefficients in $M$.

\subsection{Secondary Cohomology of a Triple $(A,B,\varepsilon)$}
We recall from \cite{sta} the definition of the secondary Hochschild  cohomology.

Let $A$ be an associative $k$-algebra, $B$  a commutative $k$-algebra, $\varepsilon:B\to A$ a morphism of $k$-algebras such that $\varepsilon(B)\subset {\mathcal Z}(A)$,  and $M$ an $A$-bimodule. We assume that for every $\alpha\in B$ and $m\in M$ we have $\varepsilon(\alpha)m=m\varepsilon(\alpha)$. Let  $$C^n((A,B,\varepsilon);M)=Hom_k(A^{\otimes n}\otimes B^{\otimes \frac{n(n-1)}{2}},M).$$
We want to define
$$\delta^{\varepsilon}_n:C^n((A,B,\varepsilon);M)\to C^{n+1}((A,B,\varepsilon);M).$$
It is convenient to think about an element    $T\in A^{\otimes n}\otimes B^{\otimes \frac{n(n-1)}{2}}$ using the following matrix representation:
$$
%\displaystyle\bigotimes
T={\otimes}\left(
\begin{array}{cccccccc}
 a_{1}       & b_{1,2} &...&b_{1,n-2}&b_{1,n-1}&b_{1,n}\\
 1  & a_{2} &...&b_{2,n-2}      &b_{2,n-1}&b_{2,n}\\
 .       &. &...&.&.&.\\
 1& 1  &...&1&a_{n-1}&b_{n-1,n}\\
 1& 1  &...&1&1&a_{n}\\
\end{array}
\right),
$$
where $a_i\in A$, $b_{i,j}\in B$ and $1\in k$. Notice that we do not have exactly the same notation as in \cite{sta}, the difference here is that all the indices are shifted by one.

For $T\in A^{\otimes m+n-1}\otimes B^{\otimes \frac{(m+n-1)(m+n-2)}{2}}$ and for all $0\leq i\leq m-1$ we denote by $T_{i+n}^{i}$ the following "sub-tensor matrix"
\begin{eqnarray*}
T_{i+n}^i=
\displaystyle\otimes
\left(
\begin{array}{ccc}
 a_{i+1}& ... & b_{i+1,i+n}\\
.& ...       & .\\
1&...&a_{i+n}\\
\end{array}
\right).
\end{eqnarray*}
One should notice that unless $i=0$ it does not make sense to talk about $T^i_{i+n}$ as a tensor but only as a sub-tensor of $T$. Clearly we have $T=T^0_n$.

For a tensor matrix $T\in A^{\otimes n}\otimes B^{\otimes \frac{n(n-1)}{2}}$ and positive integers $l,i,$ and $k$ such that $1\leq l\leq i\leq k\leq n-1$ we consider the sub-tensor matrix
 $$M_{i,i+1}^{l,k}=\displaystyle\otimes
\left(
\begin{array}{cccccccc}
 a_l& b_{l,2} &  ...&b_{l,i}b_{l,i+1}& ...& b_{l,k}& b_{l,k+1}\\
1 & a_{l+1}    &...&b_{l+1,i}b_{l+1,i+1}& ...& b_{l+1,k}&b_{l+1,k+1}\\
. & .       &...&...&...&.&.\\
1 & 1      &... &\varepsilon(b_{i,i+1})a_{i}a_{i+1}&...&b_{i,k}b_{i+1,k}&b_{i,k+1}b_{i+1,k+1}\\
. & .       &...&...&...&.&.\\
1 & 1       &...&...&...&a_{k}&b_{k,k+1}\\
1 & 1& ...&...&...&1&a_{k+1}\\
\end{array}
\right).$$

With the above notations we define
$$\delta^{\varepsilon}_n:C^n((A,B,\varepsilon);M)\to C^{n+1}((A,B,\varepsilon);M),$$
\begin{eqnarray*}
&&\delta^{\varepsilon}_n(f)(T_{n+1}^0)=a_1\varepsilon(b_{1,2}b_{1,3}...b_{1,n+1})f(T^1_{n+1})-
f(M^{1,n}_{1,2})+ f(M^{1,n}_{2,3})+\\&&...+
(-1)^{i}f(M^{1,n}_{i,i+1})+...+(-1)^{n-1}f(M^{1,n}_{n-1,n})+(-1)^{n}f(M^{1,n}_{n,n+1})+\\
&&(-1)^{n+1}f(T^0_{n})a_{n+1}\varepsilon(b_{1,n+1}b_{1,n+1}...b_{n,n+1}).
\end{eqnarray*}

\begin{proposition} (\cite{sta})  $(C^n((A,B,\varepsilon);M),\delta_n^{\varepsilon})$ is a complex (i.e. $\delta_{n+1}^{\varepsilon}\delta_n^{\varepsilon}=0$). We denote its homology by $H^n((A,B,\varepsilon);M)$ and we call it the secondary Hochschild cohomology of the triple $(A,B,\varepsilon)$ with coefficients in $M$.
\end{proposition}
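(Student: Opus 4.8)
The plan is to run the classical verification that $\delta_{n+1}\delta_n=0$ for Hochschild cochains, keeping careful track of the extra $B$-decorations. Write $\delta^{\varepsilon}_n=\sum_{i=0}^{n+1}(-1)^i\partial_i$, where $\partial_0(f)(T^0_{n+1})=a_1\varepsilon(b_{1,2}b_{1,3}\cdots b_{1,n+1})f(T^1_{n+1})$, $\partial_i(f)(T^0_{n+1})=f(M^{1,n}_{i,i+1})$ for $1\le i\le n$ (``merge rows and columns $i$ and $i+1$''), and $\partial_{n+1}(f)(T^0_{n+1})=f(T^0_n)a_{n+1}\varepsilon(b_{1,n+1}b_{2,n+1}\cdots b_{n,n+1})$. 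Expanding, $\delta^{\varepsilon}_{n+1}\delta^{\varepsilon}_n(f)=\sum_{0\le i\le n+1}\sum_{0\le j\le n+2}(-1)^{i+j}\partial_j\partial_i(f)$, so it suffices to prove the semi-cosimplicial identities $\partial_j\partial_i=\partial_i\partial_{j-1}$ for $0\le i<j\le n+2$; then the usual sign-matching pairs up the terms and everything cancels. (Equivalently, one checks that $n\mapsto C^n((A,B,\varepsilon);M)$ with the cofaces $\partial_i$ is a semi-cosimplicial $k$-module, after which $\delta^{\varepsilon}_{n+1}\delta^{\varepsilon}_n=0$ is formal.)

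The actual work is to verify these identities on tensor matrices, i.e. to show that the two prescribed ways of collapsing a matrix with $n+2$ rows down to one with $n$ rows yield the same element of $A^{\otimes n}\otimes B^{\otimes\frac{n(n-1)}{2}}$. I would organize it into cases according to the types of $\partial_i,\partial_j$ involved. First, two inner merges: when $j\ge i+2$ the two merges touch disjoint blocks of rows and columns and visibly commute (with the correct reindexing), while for $j=i+1$ both compositions collapse rows and columns $i,i+1,i+2$ into a single row and column, and the equality on the diagonal entry reduces to associativity of the product in $A$ together with commutativity of $B$ and the centrality $\varepsilon(B)\subseteq{\mathcal Z}(A)$ — these are exactly what is needed to collect the scalar $\varepsilon(b_{i,i+1}b_{i,i+2}b_{i+1,i+2})$ in front of $a_ia_{i+1}a_{i+2}$ independently of the order of the two merges — while on the off-diagonal $B$-entries it reduces to associativity in $B$ (the entry in row $l$ becomes $b_{l,i}b_{l,i+1}b_{l,i+2}$ either way). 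Second, one inner merge together with one of $\partial_0$ or $\partial_{n+1}$: here one tracks which of the entries $b_{1,\ast}$ (resp. $b_{\ast,n+1}$) are swallowed by the merge before the scalar $\varepsilon$-factor of $\partial_0$ (resp. $\partial_{n+1}$) is formed, and checks that $B$-commutativity and $\varepsilon(B)\subseteq{\mathcal Z}(A)$ make the two resulting products of $\varepsilon$-images agree. Third, the remaining boundary cases $\partial_0\partial_0$, $\partial_{n+1}\partial_{n+1}$, and $\partial_{n+2}\partial_0$ versus $\partial_0\partial_{n+1}$: the first two again need only that $\varepsilon$ is an algebra map together with $\varepsilon(B)$ central, whereas the comparison $\partial_{n+2}\partial_0=\partial_0\partial_{n+1}$ is precisely where the hypothesis $\varepsilon(\alpha)m=m\varepsilon(\alpha)$ on $M$ is used, since a factor $\varepsilon(b_{1,n+1})$ ends up acting on $M$ from the left in one composition and from the right in the other.

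I expect the only genuine obstacle to be the bookkeeping in the second case and in the $j=i+1$ subcase of the first: namely, correctly recording which $b_{p,q}$'s get multiplied together, in which order, and which of them enter the $\varepsilon$-decorated scalar factors, once a row or column is absorbed into a neighbour. No idea beyond the classical computation is required; the three hypotheses on the triple $(A,B,\varepsilon)$ — $B$ commutative, $\varepsilon(B)\subseteq{\mathcal Z}(A)$, and $\varepsilon(\alpha)m=m\varepsilon(\alpha)$ — are used exactly, and only, to license the rearrangements above, which is itself a good consistency check that the definition of $\delta^{\varepsilon}_n$ is the right one.
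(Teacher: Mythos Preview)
Your plan is correct: packaging $\delta^{\varepsilon}_n$ as the alternating sum of cofaces $\partial_i$ and then verifying the semi-cosimplicial relations $\partial_j\partial_i=\partial_i\partial_{j-1}$ on tensor matrices is exactly the right strategy, and your case analysis (disjoint inner merges, adjacent inner merges using associativity of $A$ and centrality of $\varepsilon(B)$, boundary-versus-inner, and the two-boundary case using $\varepsilon(\alpha)m=m\varepsilon(\alpha)$) isolates precisely where each hypothesis is consumed. One tiny index slip: in the comparison $\partial_{n+2}\partial_0=\partial_0\partial_{n+1}$ on $T^0_{n+2}$ the scalar that must commute through $M$ is $\varepsilon(b_{1,n+2})$, not $\varepsilon(b_{1,n+1})$.

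As for the comparison you were asked to make: the paper does not give a proof of this proposition at all---it simply quotes the result from \cite{sta}. So there is nothing to compare your argument against in this paper; your write-up is in fact more detailed than what appears here. The cosimplicial-identities route you outline is the standard and expected one, and is presumably what is carried out in the cited reference.
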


\begin{example} When $B=k$ and $\varepsilon:k\to A$ we have that $H^n((A,k,\varepsilon);M)$ is the usual Hochschild cohomology.
\end{example}

\subsection{Pre-Lie systems}
We recall from \cite{g1} the definition of a pre-Lie system.
\begin{definition} A pre-Lie system is a family of pairs $\{V_n,\circ_i\}$, where $V_n$ are $k$-vector spaces for all $n\in \mathbb{Z}$ and $\circ_i=\circ_i(m,n):V_m\otimes V_n\to V_{m+n}$  are $k$-linear maps for all $0\leq i\leq m$. Moreover the following identities hold
\begin{eqnarray*}(f^m\circ_i g^n)\circ_j h^p=(f^m\circ_j h^p)\circ_{i+p}g^n \; \; if \; \; 0\leq j\leq i-1,\\
(f^m\circ_i g^n)\circ_j h^p=f^m\circ_i(g^n\circ_{j-i} h^p) \; \; if \; \; i\leq j\leq n+1.
\end{eqnarray*}
\end{definition}
Given a pre-Lie system $\{V_n,\circ_i\}$ then for all $m\geq 0$  one can define
$\circ:V_m\otimes V_n\to V_{m+n}$
$$f^m\circ g^n=\sum_{i=0}^m(-1)^{ni}f^m\circ_i g^n.$$
The following result was proved in \cite{g1}.
\begin{theorem} Let $\{V_n,\circ_i\}$ be a pre-Lie system. Define $A=\oplus_{n}V_n$ and $[.,.]:A\otimes A\to A$, where $[f^m,g^n]=f^m\circ g^n-(-1)^{mn}[g^n,f^m]$. Then $(A,[.,.])$ is a graded Lie algebra. \label{theorem1}
\end{theorem}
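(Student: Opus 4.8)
The plan is to follow Gerstenhaber's original argument from \cite{g1}. First extend $\circ$ and the bracket bilinearly to $A=\oplus_n V_n$. Graded antisymmetry, $[f^m,g^n]=-(-1)^{mn}[g^n,f^m]$, is immediate from the defining formula $[f^m,g^n]=f^m\circ g^n-(-1)^{mn}g^n\circ f^m$, so the whole content of the theorem is the graded Jacobi identity
\[ (-1)^{mp}[[f^m,g^n],h^p]+(-1)^{nm}[[g^n,h^p],f^m]+(-1)^{pn}[[h^p,f^m],g^n]=0. \]

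The key intermediate step is to show that $\circ$ is a graded right pre-Lie product on $A$, i.e. that the associator $(f^m,g^n,h^p):=(f^m\circ g^n)\circ h^p-f^m\circ(g^n\circ h^p)$ satisfies
\[ (f^m,g^n,h^p)=(-1)^{np}\,(f^m,h^p,g^n). \]
To establish this I would expand $(f^m\circ g^n)\circ h^p$ and $f^m\circ(g^n\circ h^p)$ via $f\circ g=\sum_i(-1)^{|g|i}f\circ_i g$, producing double sums over insertion indices $i,j$ weighted by $(-1)^{ni}$ and $(-1)^{pj}$. I then split the range of $j$ according to whether the second insertion falls to the left of, inside, or to the right of the block created by the first insertion. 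On the left/right parts the first pre-Lie system axiom $(f\circ_i g)\circ_j h=(f\circ_j h)\circ_{i+p}g$ applies; after the index shift it forces, these terms become manifestly symmetric under $g\leftrightarrow h$ with Koszul sign $(-1)^{np}$. On the inside part the second axiom $(f\circ_i g)\circ_j h=f\circ_i(g\circ_{j-i}h)$ applies and cancels precisely against the matching terms of $f^m\circ(g^n\circ h^p)$. What remains is the associator, which by construction depends on the pair $(g^n,h^p)$ only through a symmetric combination, giving the displayed identity.

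Finally, the passage from the graded pre-Lie identity to graded Jacobi is purely formal. Substituting $[f^m,g^n]=f^m\circ g^n-(-1)^{mn}g^n\circ f^m$ into the left-hand side of the Jacobi identity and expanding yields twelve terms, each a double $\circ$-composition of $f^m,g^n,h^p$ in some order; grouping them into the associators $(f^m,g^n,h^p)$, $(f^m,h^p,g^n)$, $(g^n,h^p,f^m)$, and their cyclic partners, and applying $(x,y,z)=(-1)^{|y||z|}(x,z,y)$, every term cancels against another with opposite sign. This is the standard right-symmetric-algebra computation and carries no surprises.

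The main obstacle is the sign bookkeeping in the pre-Lie step: one must track how the Koszul signs $(-1)^{ni}$, $(-1)^{pj}$ transform under the index shifts $j\mapsto i+p$ and $j\mapsto j-i$ dictated by the two axioms and verify that they combine to exactly $(-1)^{np}$ on the surviving symmetric terms. A single sign slip invalidates the argument, and the three-way splitting of the summation range must be carried out carefully at the boundary values of $i$ and $j$, where the two axioms meet. Everything else is mechanical.
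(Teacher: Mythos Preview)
The paper does not supply its own proof of this theorem; it simply attributes the result to Gerstenhaber \cite{g1} and uses it as a black box. Your proposal is therefore exactly the argument the paper is pointing to: verify that $(V_\bullet,\circ)$ is a graded (right) pre-Lie algebra by splitting the double sum for $(f^m\circ g^n)\circ h^p$ into left/inside/right ranges, apply the two pre-Lie system axioms respectively, and then run the standard twelve-term cancellation to get graded Jacobi. There is nothing missing and nothing different to compare.

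Two small remarks. First, the displayed formula in the statement contains a typo (it has $[g^n,f^m]$ where $g^n\circ f^m$ is meant); you have silently corrected this, which is the right reading. Second, the paper's range ``$i\le j\le n+1$'' in the second pre-Lie axiom should be $i\le j\le i+n$; your three-way split implicitly uses the corrected range, and you should also make explicit that the ``right'' range $j\ge i+n+1$ is handled by the first axiom with the roles of $(g^n,i)$ and $(h^p,j)$ exchanged, since the paper only states the $j\le i-1$ case.
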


\section{Cup Product and Bracket Product}

\subsection{Cup Product} This section follows closely the results from \cite{g1}. For $f\in C^m((A,B,\varepsilon);A)$ and $g\in C^n((A,B,\varepsilon);A)$ we define
\begin{eqnarray*}
&f\smile g\left(\displaystyle\otimes
\left(
\begin{array}{cccccccc}
 a_{1}& b_{1,2} &  ...&b_{1,m+n-1}&b_{1,m+n}\\
1 & a_{2}    &...&b_{2,m+n-1}&b_{2,m+n}\\
. & .       &...&.&.\\
1& 1& ...&a_{m+n-1}&b_{m+n-1,m+n}\\
1 & 1& ...&1&a_{m+n}\\
\end{array}
\right)\right)=\\
&f\left(\displaystyle\otimes
\left(
\begin{array}{cccccccc}
 a_{1}& b_{1,2} &  ...&b_{1,m-1}&b_{1,m}\\
1 & a_{2}      &...&b_{2,m-1}&b_{2,m}\\
. & . &...&.&.\\
1& 1 &...&a_{m-1}&b_{m-1,m}\\
1 & 1  &...&1&a_{m}\\
\end{array}
\right)\right) \prod\limits_{\substack{m+1\leq j\leq m+n\\1\leq i\leq m}}\varepsilon(b_{i,j})\\
&g\left(\displaystyle\otimes
\left(
\begin{array}{cccccccc}
 a_{m+1}& b_{m+1,m+2} &  ...&b_{m+1,m+n-1}&b_{m+1,m+n}\\
1 & a_{m+2}      &...&b_{m+2,m+n-1}&b_{m+2,m+n}\\
. & . &...&.&.\\
1& 1 &...&a_{m+n-1}&b_{m+n-1,m+n}\\
1 & 1  &...&1&a_{m+n}\\
\end{array}
\right)\right).
\end{eqnarray*}

Using the notations introduced earlier we have the equivalent formula
$$(f\smile g)(T^0_{m+n})=f(T^{0}_m)g(T^m_{m+n})\prod\limits_{\substack{m+1\leq j\leq m+n\\1\leq i\leq m}}\varepsilon(b_{i,j}).$$

One can easily check that $$\smile:C^m((A,B,\varepsilon);A)\otimes C^n((A,B,\varepsilon);A)\to C^{m+n}((A,B,\varepsilon);A)$$
 induces a graded associative algebra structure on $C^*((A,B,\varepsilon);A)$. Moreover, the cup product satisfies the identity
\begin{eqnarray}
\delta^{\varepsilon}_{m+n}(f\smile g)=\delta^{\varepsilon}_m(f)\smile g+(-1)^mf\smile \delta^{\varepsilon}_n(g).
\label{delta1}
\end{eqnarray}

To prove this let $f\in C^m((A,B,\varepsilon);A)$ and $g\in C^n((A,B,\varepsilon);A).$ Then we have

\begin{center}$\delta^{\varepsilon}_{m+n}(f\smile g)(T^0_{m+n+1})=\displaystyle a_1\prod_{i=2}^{m+n+1}\varepsilon(b_{1,i})(f\smile g)(T_{m+n+1}^1)-$\end{center}
\begin{center} $-(f\smile g)(M_{1,2}^{1,m+n})+\dots+(-1)^i(f\smile g)(M_{i, i+1}^{1,m+n})+\dots$\end{center}
\begin{center} $+(-1)^{m+n}(f\smile g)(M_{m+n, m+n+1}^{1,m+n})+$\end{center}

\begin{center} $+(-1)^{m+n+1}(f\smile g)(T_{m+n}^0)a_{m+n+1}\displaystyle\prod_{i=1}^{m+n}\varepsilon(b_{i,m+n+1})= a_1\prod_{i=2}^{m+n+1}\varepsilon(b_{1,i})f(T_{m+1}^1)\prod\limits_{\substack{2\leq i\leq m+1\\m+2\leq j\leq m+n+1}}\varepsilon(b_{i,j})g(T_{m+n+1}^{m+1})-$\end{center}
\begin{center} $-f(M_{1,2}^{1,m})\displaystyle\prod\limits_{\substack{1\leq i\leq m+1\\m+2\leq j\leq m+n+1}}\varepsilon(b_{i,j})g(T_{m+n+1}^{m+1})+\dots+(-1)^if(M_{i,i+1}^{1,m})\prod\limits_{\substack{1\leq i\leq m+1\\m+2\leq j\leq m+n+1}}\varepsilon(b_{i,j})g(T_{m+n+1}^{m+1})+\dots+\displaystyle(-1)^mf(M_{m,m+1}^{1,m})\prod\limits_{\substack{1\leq i\leq m+1\\m+2\leq j\leq m+n+1}}\varepsilon(b_{i,j})g(T_{m+n+1}^{m+1})+(-1)^{m+1}f(T_m^0)g(M_{m+1,m+2}^{m+1,m+n})\prod\limits_{\substack{1\leq i\leq m\\m+1\leq j\leq m+n+1}}\varepsilon(b_{i,j})+(-1)^{m+2}f(T_m^0)g(M_{m+2,m+3}^{m+1,m+n})\prod\limits_{\substack{1\leq i\leq m\\m+1\leq j\leq m+n+1}}\varepsilon(b_{i,j})+\dots
+(-1)^{m+n}f(T_m^0)g(M_{m+n,m+n+1}^{m+1,m+n})\prod\limits_{\substack{1\leq i\leq m\\m+1\leq j\leq m+n+1}}\varepsilon(b_{i,j})+(-1)^{m+n+1}f(T_m^0)g(T_{m+n}^m)a_{m+n+1}\prod\limits_{\substack{1\leq i\leq m\\m+1\leq j\leq m+n}}\varepsilon(b_{i,j})\prod_{i=1}^{m+n}\varepsilon(b_{i,m+n+1}).$ \end{center}

On the other hand we have

\begin{center} $\left(\delta^{\varepsilon}_m(f)\smile g+(-1)^mf\smile \delta^{\varepsilon}_n(g)\right)(T^0_{m+n+1})=$\end{center}

\begin{center}$\displaystyle\delta^{\varepsilon}_m(f)(T_{m+1}^0)g(T_{m+n+1}^{m+1})\prod\limits_{\substack{1\leq i\leq m+1\\m+2\leq j\leq m+n+1}}\varepsilon(b_{i,j})+(-1)^mf(T_m^0)\delta^{\varepsilon}_n(g)(T_{m+n+1}^m)\prod\limits_{\substack{1\leq i\leq m\\m+1\leq j\leq m+n+1}}\varepsilon(b_{i,j})=a_1\prod_{i=2}^{m+1}\varepsilon(b_{1,i})f(T_{m+1}^1)g(T_{m+n+1}^{m+1})\prod\limits_{\substack{1\leq i\leq m+1\\m+2\leq j\leq m+n+1}}\varepsilon(b_{i,j})-\displaystyle-f(M_{1,2}^{1,m})g(T_{m+n+1}^{m+1})\prod\limits_{\substack{1\leq i\leq m+1\\m+2\leq j\leq m+n+1}}\varepsilon(b_{i,j})+\dots+(-1)^if(M_{i,i+1}^{1,m})g(T_{m+n+1}^{m+1})\prod\limits_{\substack{1\leq i\leq m+1\\m+2\leq j\leq m+n+1}}\varepsilon(b_{i,j})+\dots+(-1)^{m}f(M_{m,m+1}^{1,m})g(T_{m+n+1}^{m+1})\prod\limits_{\substack{1\leq i\leq m+1\\m+2\leq j\leq m+n+1}}\varepsilon(b_{i,j})+(-1)^{m+1}f(T_{m}^0)a_{m+1}\displaystyle\prod_{i=1}^m\varepsilon(b_{i,m+1})g(T_{m+n+1}^{m+1})\prod\limits_{\substack{1\leq i\leq m+1\\m+2\leq j\leq m+n+1}}\varepsilon(b_{i,j})+$\end{center}

\begin{center} $\displaystyle(-1)^{m}f(T_m^0)a_{m+1}\prod_{i=m+2}^{m+n+1}\varepsilon(b_{m+1,i})g(T_{m+n+1}^{m+1})\prod\limits_{\substack{1\leq i\leq m\\m+1\leq j\leq m+n+1}}\varepsilon(b_{i,j})+(-1)^{m+1}f(T_m^0)\prod\limits_{\substack{1\leq i\leq m\\m+1\leq j\leq m+n+1}}\varepsilon(b_{i,j})g(M_{m+1,m+2}^{m+1,m+n})+\dots+(-1)^{m}f(T_m^0)\prod\limits_{\substack{1\leq i\leq m\\m+1\leq j\leq m+n+1}}\varepsilon(b_{i,j})
(-1)^{n}g(M_{m+n,m+n+1}^{m+1,m+n})+(-1)^{m}f(T_m^0)(-1)^{n+1}g(T_{m+n}^m)a_{m+n+1}\prod_{i=m+1}^{m+n}\varepsilon(b_{i,m+n+1})
\prod\limits_{\substack{1\leq i\leq m\\m+1\leq j\leq m+n+1}}\varepsilon(b_{i,j}).$\end{center}

One should note that the $(m+1)^{th}$ term in the expansion of $\delta^{\varepsilon}_m(f)\smile g$ and the first term in that of
 $(-1)^mf\smile \delta^{\varepsilon}_n(g)$ cancel each other. It is easy to see now that the terms in the expansion of $\delta^{\varepsilon}_{m+n}(f\smile g)$ are equal to the remaining terms in that of $\delta^{\varepsilon}_m(f)\smile g+(-1)^mf\smile \delta^{\varepsilon}_n(g)$ in the order in which they appear.
Therefore we have the following proposition.
\begin{proposition} The cup product defines a structure of graded associative algebra on the secondary Hochschild cohomology $H^{*}((A,B,\varepsilon);A)$.
$$\smile:H^{*}((A,B,\varepsilon);A)\otimes H^{*}((A,B,\varepsilon);A)\to H^{*}((A,B,\varepsilon);A).$$
\end{proposition}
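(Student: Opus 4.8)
The plan is to derive the proposition formally from two facts already available at the cochain level: that $\smile$ endows $C^{*}((A,B,\varepsilon);A)$ with a graded associative and unital algebra structure, and the Leibniz-type identity \eqref{delta1}, namely $\delta^{\varepsilon}_{m+n}(f\smile g)=\delta^{\varepsilon}_{m}(f)\smile g+(-1)^{m}f\smile\delta^{\varepsilon}_{n}(g)$. First I would check that $\smile$ takes a pair of cocycles to a cocycle: if $f\in C^{m}((A,B,\varepsilon);A)$ and $g\in C^{n}((A,B,\varepsilon);A)$ satisfy $\delta^{\varepsilon}_{m}(f)=0$ and $\delta^{\varepsilon}_{n}(g)=0$, then \eqref{delta1} gives at once $\delta^{\varepsilon}_{m+n}(f\smile g)=0$, so $f\smile g$ represents a class in $H^{m+n}((A,B,\varepsilon);A)$.

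Next I would verify that this class depends only on the classes of $f$ and $g$. Suppose $f'=f+\delta^{\varepsilon}_{m-1}(u)$ with $u\in C^{m-1}((A,B,\varepsilon);A)$ and $g$ a cocycle of degree $n$; applying \eqref{delta1} in bidegree $(m-1,n)$ and using $\delta^{\varepsilon}_{n}(g)=0$ yields $\delta^{\varepsilon}_{m-1}(u)\smile g=\delta^{\varepsilon}_{m+n-1}(u\smile g)$, so $f'\smile g$ and $f\smile g$ differ by a coboundary. Symmetrically, if $g'=g+\delta^{\varepsilon}_{n-1}(v)$ with $v\in C^{n-1}((A,B,\varepsilon);A)$ and $f$ a cocycle of degree $m$, then \eqref{delta1} in bidegree $(m,n-1)$ together with $\delta^{\varepsilon}_{m}(f)=0$ gives $f\smile\delta^{\varepsilon}_{n-1}(v)=(-1)^{m}\delta^{\varepsilon}_{m+n-1}(f\smile v)$, again a coboundary. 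Hence $\smile$ descends to a well-defined bilinear map $H^{m}((A,B,\varepsilon);A)\otimes H^{n}((A,B,\varepsilon);A)\to H^{m+n}((A,B,\varepsilon);A)$.

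It then remains to transport the algebra axioms to the quotient. Graded associativity $(f\smile g)\smile h=f\smile(g\smile h)$ holds already for cochains, so it passes to cohomology. Since $\delta^{\varepsilon}_{0}$ sends $a\in A=C^{0}((A,B,\varepsilon);A)$ to the inner derivation $a_{1}\mapsto a_{1}a-aa_{1}$, the element $1_{A}$ is a cocycle, and from the defining formula for $\smile$ it is a two-sided unit on cochains; hence its class is a unit on $H^{*}((A,B,\varepsilon);A)$. This gives the asserted graded associative algebra structure.

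I do not expect any real obstacle here: the substantive work — establishing \eqref{delta1} and the cochain-level associativity of $\smile$ — has already been carried out before the proposition, and what remains is the bookkeeping of signs in the independence-of-representatives step. The one point worth care is that there one must apply \eqref{delta1} with a single argument perturbed by $u$ (respectively $v$), so it is essential that the differential of the \emph{un}perturbed argument vanishes; this is exactly what the cocycle hypothesis supplies.
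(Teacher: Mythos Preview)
Your proposal is correct and is exactly the argument the paper has in mind: the paper's own proof is the single line ``It follows from (\ref{delta1}),'' and you have simply unpacked that reference into the standard verification that the Leibniz identity forces cocycles to multiply to cocycles and makes the product independent of representatives, after which associativity and the unit pass to the quotient.
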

\begin{proof}
It follows  from (\ref{delta1}).
\end{proof}

\subsection{Bracket Product}

Next we want to define a pre-Lie system. Take $V_m=C^{m+1}((A,B,\varepsilon);A)$ for all $m\geq -1$, and $V_m=0$ for all $m<-1$. Notice that the $m$-cochains have degree $m-1$. Because of this shift it is more convenient to give the definition of $\circ_i:V_{m-1}\otimes V_{n-1}\to V_{m+n-2}$.  For $m$ and $n\geq 0$ and  $0\leq i\leq m-1$ we define
$$\circ_i:C^m((A,B,\varepsilon);A)\otimes C^n((A,B,\varepsilon);A)\to C^{m+n-1}((A,B,\varepsilon);A).$$
If $f^m\in V_{m-1}=C^m((A,B,\varepsilon);A)$, $g^n\in V_{n-1}=C^n((A,B,\varepsilon);A)$ and $0\leq i\leq m-1$ then
\begin{eqnarray*}
&&f^m\circ_i g^n\left(\displaystyle
%\bigotimes
\otimes
\left(
\begin{array}{cccccccc}
 a_{1}& b_{1,2} &  ...&b_{1,m+n-2}&b_{1,m+n-1}\\
1 & a_{2}    &...&b_{2,m+n-2}&b_{2,m+n-1}\\
. & .       &...&.&.\\
1& 1& ...&a_{m+n-2}&b_{m+n-2,m+n-1}\\
1 & 1& ...&1&a_{m+n-1}\\
\end{array}
\right)\right)=
\end{eqnarray*}

\begin{eqnarray*}
&&f^m\left(\displaystyle
%\bigotimes
\otimes
\left(
\begin{array}{ccccccccc}
 a_{1}&  ...&b_{1,i}&\prod\limits_{i< j\leq n+i}b_{1,j}&b_{1,n+i+1}&...&b_{1,m+n-1}\\
1 & ...&b_{2,i}&\prod\limits_{i< j\leq n+i}b_{2,j}&b_{2,n+i+1}&...&b_{2,m+n-1}\\
. & ...&.&.&.&...&.\\
1 & ...&a_{i}&\prod\limits_{i< j\leq n+i}b_{i,j}&b_{i,n+i+1}&...&b_{i,m+n-1}\\
1 & ...&1&g^n\left(T_{n+i}^i\right)&\prod\limits_{i< j\leq n+i}b_{j,n+i}&...&\prod\limits_{i< j\leq n+i}b_{j,m+n-1}\\
1 & ...&1&1&a_{n+i+1}&...&b_{n+i+1,m+n-1}\\
. & ...       &.&.&.&...&.\\
1& ...&1&1&1&...&b_{m+n-2,m+n-1}\\
1 &  ...&1&1&1&...&a_{m+n-1}\\
\end{array}
\right)\right).
\end{eqnarray*}

One can check that for all $0\leq i\leq m-1$ and  $0\leq j\leq i-1$ we have
\begin{eqnarray*}
&&(f^m\circ_i g^n)\circ_j h^p\left(\displaystyle
\otimes
\left(
\begin{array}{cccccccc}
 a_{1}&   ...&b_{1,m+n+p-2}\\
.        &...&.\\
1 & ...&a_{m+n+p-2}\\
\end{array}
\right)\right)=\\
&&\\
&&(f^m\circ_i g^n)\left(\displaystyle
%\bigotimes
\otimes
\left(
\begin{array}{ccccccccc}
 a_{1}&  ...&\prod\limits_{j< k\leq j+p}b_{1,k}&...&b_{1,m+n+p-2}\\
.       &...&.&...&.\\
1        &...&h\left(T_{j+p}^j\right)&...&\prod\limits_{j< k\leq j+p}b_{k,m+n+p-2}\\
.       &...&.&...&.\\
1 & ...&1&...&a_{m+n+p-2}\\
\end{array}
\right)\right)
=\\
&&\\
&&f^m\left(\displaystyle
%\bigotimes
\otimes
\left(
\begin{array}{ccccccccc}
 a_{1}&  ...&\prod\limits_{j< k\leq j+p}b_{1,k}&...&\prod\limits_{i+p-1< l\leq i+p+n-1}b_{1,l}&...&b_{1,m+n+p-2}\\
.       &...&.&...&.&...&.\\
1        &...&h^p\left(T_{j+p}^j\right)&...&\prod\limits_{\substack{j< k\leq n+j\\
i+p-1<l\leq i+p+n-1}}b_{k,l}&...&\prod\limits_{j< k\leq j+p}b_{k,m+n+p-2}\\
.       &...&.&...&.&...&.\\
1        &...&1&...&g^n\left(T_{i+p+n-1}^{i+p-1}\right)&...&\prod\limits_{i+p-1< l\leq i+p+n-1}b_{l,m+n+p-2}\\
.       &...&.&...&.&...&.\\
1 & ...&1&...&1&...&a_{m+n+p-2}\\
\end{array}
\right)\right)
=\\
&&\\
&&(f^m\circ_j h^p)\left(\displaystyle
%\bigotimes
\otimes
\left(
\begin{array}{ccccccccc}
 a_{1}& ...&\prod\limits_{i+p-1< l\leq i+p+n-1}b_{1,l}&...&b_{1,m+n+p-2}\\
.       &...&.&...&.\\
1        &...&g\left(T_{i+p+n-1}^{i+p-1}\right)&...&\prod\limits_{i+p-1< l\leq i+p+n-1}b_{l,m+n+p-2}\\
.       &...&.&...&.\\
1 & ...&1&...&a_{m+n+p-2}\\
\end{array}
\right)\right)
=\\
&&\\
&&(f^m\circ_j h^p)\circ_{i+p-1}g^n \left(\displaystyle
\otimes
\left(
\begin{array}{cccccccc}
 a_{1}&   ...&b_{1,m+n+p-2}\\
.       &...&.\\
1 & ...&a_{m+n+p-2}\\
\end{array}
\right)\right).
%\bigotimes
\end{eqnarray*}
A similar computation shows that for all $i\leq j\leq n$ we have
\begin{eqnarray*}
&&(f^m\circ_i g^n)\circ_j h^p\left(\displaystyle
\otimes
\left(
\begin{array}{cccccccc}
 a_{1} &  ...&b_{1,m+n+p-2}\\
.        &...&.\\
1 & ...&a_{m+n+p-2}\\
\end{array}
\right)\right)=\\
%&&f\left(\displaystyle
%\bigotimes
%\otimes
%\left(
%\begin{array}{ccccccccc}
% a_{1}&  ...&\prod\limits_{i< k\leq i+n+p-1}b_{1,k}&...&b_{1,m+n+p-2}\\
%.       &...&.&...&.\\
%1      &...&g\left(\otimes
%\left(
%\begin{array}{ccccccccc}
% a_{i+1}&  ...&\prod\limits_{j< k\leq j+p}b_{i+1,k}&...&b_{i+1,i+n+p-1}\\
%.       &...&.&...&.\\
%1      &...&h\left(T_{j+p}^j\right)&...&\prod\limits_{j< k\leq j+p}b_{k,i+n+p-1}\\
%.       &...&.&...&.\\
%1 & ...&1&...&a_{i+n+p-1}\\
%\end{array}
%\right)\right)&...&\prod\limits_{i< k\leq i+n+p-1}b_{k,m+n+p-2}\\
%.       &...&.&...&.\\
%1 & ...&1&...&a_{m+n+p-2}\\
%\end{array}
%\right)\right)=\\
&&f^m\circ_i(g^n\circ_{j-i} h^p)\left(\displaystyle
\otimes
\left(
\begin{array}{cccccccc}
 a_{1}&  ...&b_{1,m+n+p-2}\\
.       &...&.\\
1 &  ...&a_{m+n+p-2}\\
\end{array}
\right)\right).
\end{eqnarray*}
Since there is a shift between the degree and the index of the cohomology group, we will write an explicit formula for the pre-Lie algebra structure. Let  $f^m\in H^m((A,B,\varepsilon); A)$ and $g^n\in H^n((A,B,\varepsilon); A).$ We define
$$f^m\circ g^n=\sum_{i=0}^{m-1}(-1)^{(n-1)i}f^m\circ_i g^n.$$ The above considerations imply that we have the following theorem.

\begin{theorem} $\left(H^*((A,B,\varepsilon); A),[.,.]\right)$ is a graded Lie algebra, where
$$[f^m,g^n]=f^m\circ g^n-(-1)^{(m-1)(n-1)}g^n\circ f^m$$
and the degree of $f^m\in H^m((A,B,\varepsilon); A)$ is $m-1$.
\end{theorem}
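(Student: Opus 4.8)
The plan is to first equip the whole cochain complex with a graded Lie algebra structure and then push it down to cohomology by recognizing the coboundary map as a sign times an inner derivation. For the first part, the two chains of equalities displayed just before the statement verify exactly the two axioms of a pre-Lie system for the family $\{V_m,\circ_i\}$ with $V_m=C^{m+1}((A,B,\varepsilon);A)$ for $m\geq -1$ and $V_m=0$ otherwise (an $(m+1)$-cochain has degree $m$). Theorem \ref{theorem1} then applies: the graded vector space $\mathfrak g:=\bigoplus_{m\geq 0}C^m((A,B,\varepsilon);A)$, with $C^m$ in degree $m-1$ and with product $f^m\circ g^n=\sum_{i=0}^{m-1}(-1)^{(n-1)i}f^m\circ_i g^n$, is a graded Lie algebra under $[f^m,g^n]=f^m\circ g^n-(-1)^{(m-1)(n-1)}g^n\circ f^m$. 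Since $\circ$ alone is not defined on cohomology, the content still to be proved is that the bracket $[\,\cdot\,,\cdot\,]$ descends.

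The decisive step is the identity
$$\delta^{\varepsilon}_n(f)=(-1)^{n-1}[\mu,f]\qquad (f\in C^n((A,B,\varepsilon);A)),$$
where $\mu\in C^2((A,B,\varepsilon);A)$ is the degree-one cochain $\mu(a_1\otimes a_2\otimes b_{1,2})=\varepsilon(b_{1,2})a_1a_2$. I would prove it by expanding $[\mu,f]=\mu\circ f-(-1)^{n-1}f\circ\mu$ from the definitions of the $\circ_i$ and of $\mu$: the summand $\mu\circ_1 f$ produces the first ("left") term $a_1\varepsilon(b_{1,2}\cdots b_{1,n+1})f(T^1_{n+1})$ of $\delta^{\varepsilon}_n(f)$, the summand $\mu\circ_0 f$ produces the last ("right") term $f(T^0_n)a_{n+1}\varepsilon(b_{1,n+1}\cdots b_{n,n+1})$, and for $0\leq i\leq n-1$ the summand $f\circ_i\mu$ produces the interior term $f(M^{1,n}_{i+1,i+2})$; matching the signs that the definition of $\circ$ carries against the signs $(-1)^{i}$ in $\delta^{\varepsilon}_n$ gives the identity. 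Throughout one uses $\varepsilon(B)\subset\mathcal Z(A)$ and $\varepsilon(\alpha)m=m\varepsilon(\alpha)$ to move the factors $\varepsilon(b_{i,j})$ freely past everything else.

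Granting this identity, the descent is purely formal. By it, the cocycles $Z:=\ker\delta^{\varepsilon}$ form the centralizer $\{z\in\mathfrak g:[\mu,z]=0\}$ of $\mu$, hence a graded Lie subalgebra of $\mathfrak g$, and the coboundaries $B:=\operatorname{Im}\delta^{\varepsilon}$ form the subspace $[\mu,\mathfrak g]$. The Proposition gives $\delta^{\varepsilon}\circ\delta^{\varepsilon}=0$, so $B\subseteq Z$; and for $z\in Z$ and $x\in\mathfrak g$ the graded Jacobi identity yields $[z,[\mu,x]]=[[z,\mu],x]\pm[\mu,[z,x]]=\pm[\mu,[z,x]]\in[\mu,\mathfrak g]=B$, since $[z,\mu]=0$. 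Hence $B$ is a graded ideal of the graded Lie algebra $Z$, so $Z/B=H^\bullet((A,B,\varepsilon);A)$ inherits a graded Lie algebra structure, and the inherited bracket is visibly $[f^m,g^n]=f^m\circ g^n-(-1)^{(m-1)(n-1)}g^n\circ f^m$ with $\deg f^m=m-1$, as claimed.

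The step I expect to be the main obstacle is the verification of $\delta^{\varepsilon}_n=(-1)^{n-1}\operatorname{ad}_{\mu}$. For $B=k$ this is the classical fact that the Hochschild coboundary is an inner derivation of the Gerstenhaber Lie algebra, but in the present generality one must check, term by term, that the products $\prod\varepsilon(b_{i,j})$ produced by $\mu\circ_0 f$, by $\mu\circ_1 f$, and by the matrices $M^{1,n}_{i+1,i+2}$ occurring in $f\circ_i\mu$ coincide exactly with those in the definition of $\delta^{\varepsilon}_n$ — an elementary but fussy bookkeeping of the $B$-entries and of how the row positions in a tensor matrix shift when two rows are merged.
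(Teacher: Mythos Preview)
Your proof is correct and follows the paper's approach: establish the pre-Lie system on cochains via the displayed computations and invoke Theorem~\ref{theorem1}. You are more explicit than the paper about the passage to cohomology---the paper's proof is the one-liner ``It follows from Theorem~\ref{theorem1} and the above computations,'' while you spell out the descent via the identity $\delta^{\varepsilon}_n=(-1)^{n-1}\mathrm{ad}_{\mu}$, which is precisely the paper's equation~(\ref{equation2}) recorded immediately after the theorem (with $\mu=\pi$).
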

\begin{proof}
It follows from Theorem \ref{theorem1} and the above computations.
\end{proof}
Define $\pi:A\otimes A\otimes B\to A$ determined by $$\pi(a\otimes b\otimes \alpha)=ab\varepsilon(\alpha).$$
It is easy to see  that  $\delta^{\varepsilon}_1(id_A)=\pi$ and so $\pi$ is a coboundary (of degree $1$). One can also show that
\begin{eqnarray}
f^m\smile g^n=(\pi \circ_0 f^m)\circ_m g^n, \label{equation1}
\end{eqnarray}
and
\begin{eqnarray}
\delta^{\varepsilon}_m(f^m)=[f^m,-\pi]=(-1)^{m-1}[\pi,f^m].\label{equation2}
\end{eqnarray}
At this point the proof of Theorem 3 from \cite{g1} can be used (and we won't reproduce it here) to get the following result:
\begin{theorem} For $f^m\in C^m((A,B,\varepsilon); A)$ and $g^n\in C^n((A,B,\varepsilon); A)$ we have
\begin{eqnarray*}
&f^m\circ \delta^{\varepsilon}(g^n)-\delta^{\varepsilon}(f^m\circ g^n)+(-1)^{n-1}\delta^{\varepsilon}(f^m)\circ g^n=&\\
&(-1)^{n-1}(g^n\smile f^m-(-1)^{mn}f^m\smile g^n).&
\end{eqnarray*}
\end{theorem}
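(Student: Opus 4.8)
The plan is to reduce the statement to the corresponding identity in \cite{g1} rather than to re-derive it from scratch. The three displayed formulas immediately preceding the theorem — namely $\delta^{\varepsilon}_1(id_A)=\pi$, the cup-product formula $f^m\smile g^n=(\pi\circ_0 f^m)\circ_m g^n$, and $\delta^{\varepsilon}_m(f^m)=[f^m,-\pi]=(-1)^{m-1}[\pi,f^m]$ — together say that the differential on $C^\bullet((A,B,\varepsilon);A)$ is governed entirely by the pre-Lie system structure via the distinguished element $\pi$, exactly as in the classical Hochschild case. So the first step is to verify that the formal machinery of Gerstenhaber's Theorem~3 in \cite{g1} uses \emph{only} these ingredients: the pre-Lie identities (already established earlier in this section for $\{V_m,\circ_i\}$), the graded Lie algebra structure from Theorem~\ref{theorem1}, the expression of $\smile$ through $\circ_0$ and $\circ_m$ against $\pi$, and the expression of $\delta^{\varepsilon}$ as a bracket with $\pi$.

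The second step is the substitution itself. Following \cite{g1}, one writes $\delta^{\varepsilon}(h)=[h,-\pi]$ for every cochain $h$, expands both sides of the desired identity using the Jacobi identity for the graded Lie bracket and the associativity-type relation $\pi\circ\pi=0$ (equivalently $\delta^{\varepsilon}_2(\delta^{\varepsilon}_1(id_A))=0$, which holds since $(C^\bullet,\delta^{\varepsilon})$ is a complex by the Proposition of \cite{sta}). The left-hand side becomes a combination of triple brackets $[[f^m,g^n],\pi]$, $[[f^m,\pi],g^n]$, $[f^m,[g^n,\pi]]$, which collapses by Jacobi; the surviving terms are precisely the pre-Lie ``associator'' measuring the failure of $\circ$ to be associative, and Gerstenhaber's computation identifies this associator with $g^n\smile f^m-(-1)^{mn}f^m\smile g^n$ up to the sign $(-1)^{n-1}$. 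Because every manipulation is formal in the pre-Lie/graded-Lie data, and that data has been set up in this section to satisfy the same axioms as in \cite{g1}, the classical argument transfers verbatim; this is why the authors say it ``can be used (and we won't reproduce it here).''

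The one point that genuinely requires care — and the step I expect to be the main obstacle — is bookkeeping of the degree shift. Here an $m$-cochain has degree $m-1$, so $\pi\in C^2$ has degree $1$, and the signs in $f^m\circ g^n=\sum_{i=0}^{m-1}(-1)^{(n-1)i}f^m\circ_i g^n$ and in $[f^m,g^n]=f^m\circ g^n-(-1)^{(m-1)(n-1)}g^n\circ f^m$ are all written in terms of the shifted degrees. One must check that the signs $(-1)^{n-1}$ appearing in the statement are exactly what Gerstenhaber's formula produces once $\deg\pi=1$ is plugged in, and that the $\varepsilon$-decorations (the products $\prod\varepsilon(b_{i,j})$ appearing throughout the $\circ_i$ and $\smile$ definitions) are inert for this purpose — they multiply through without interacting with the combinatorics, since $\varepsilon(B)\subset\mathcal{Z}(A)$ and the $b_{i,j}$ indices are simply carried along by each operation. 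Once one is satisfied that the sign convention and the central $\varepsilon$-factors cause no discrepancy, invoking Theorem~\ref{theorem1}, equations~(\ref{equation1}) and~(\ref{equation2}), and the cited Theorem~3 of \cite{g1} completes the proof.
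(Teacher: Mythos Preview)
Your proposal is correct and follows exactly the paper's own route: the paper does not reproduce the computation either, but simply observes that once the pre-Lie system axioms are verified and equations~(\ref{equation1}) and~(\ref{equation2}) are in hand, ``the proof of Theorem~3 from \cite{g1} can be used (and we won't reproduce it here).'' Your additional remarks about the degree shift and the inertness of the central $\varepsilon(b_{i,j})$ factors are the right sanity checks for why the transfer is legitimate.
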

As a simple consequence we obtain:
\begin{corollary} If $f^m\in H^m((A,B,\varepsilon); A)$ and $g^n\in H^n((A,B,\varepsilon); A)$  we have
$$f^m\smile g^n=(-1)^{mn}g^n\smile f^m.$$

\end{corollary}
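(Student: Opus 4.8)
The plan is to obtain the graded commutativity directly from the Gerstenhaber-type identity of the preceding theorem, following the argument of \cite{g1} essentially verbatim. First I would choose cocycle representatives $f^m\in C^m((A,B,\varepsilon);A)$ and $g^n\in C^n((A,B,\varepsilon);A)$ for the two given cohomology classes, so that $\delta^{\varepsilon}_m(f^m)=0$ and $\delta^{\varepsilon}_n(g^n)=0$. Substituting these into the formula of the previous theorem, the terms $f^m\circ\delta^{\varepsilon}(g^n)$ and $(-1)^{n-1}\delta^{\varepsilon}(f^m)\circ g^n$ vanish, and the identity collapses to
\[
-\delta^{\varepsilon}_{m+n-1}(f^m\circ g^n)=(-1)^{n-1}\bigl(g^n\smile f^m-(-1)^{mn}f^m\smile g^n\bigr),
\]
an equality in $C^{m+n}((A,B,\varepsilon);A)$.

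The left-hand side is by construction a coboundary, hence so is $g^n\smile f^m-(-1)^{mn}f^m\smile g^n$. By the Leibniz rule \eqref{delta1} the cup product descends to a well-defined product on $H^{\bullet}((A,B,\varepsilon);A)$, so passing to cohomology gives $g^n\smile f^m=(-1)^{mn}f^m\smile g^n$; multiplying both sides by $(-1)^{mn}$ and using $(-1)^{2mn}=1$ yields $f^m\smile g^n=(-1)^{mn}g^n\smile f^m$, which is the assertion.

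The only point requiring attention is the sign bookkeeping, which is somewhat delicate because of the degree shift: an $m$-cochain has degree $m-1$ in the pre-Lie and graded Lie structures, so the bracket sign is $(-1)^{(m-1)(n-1)}$ whereas the commutativity sign for the cup product is $(-1)^{mn}$. One must check that the auxiliary factor $(-1)^{n-1}$ produced by the theorem causes no trouble, but since it merely multiplies a quantity already known to be a coboundary it is harmless. Accordingly I expect no genuine obstacle here: the content is entirely contained in the previous theorem, and equations \eqref{equation1} and \eqref{equation2}—expressing $\smile$ through the circle operations and $\delta^{\varepsilon}$ as a bracket with $\pi$—are precisely what makes Gerstenhaber's proof of Theorem 3 in \cite{g1} transfer without change.
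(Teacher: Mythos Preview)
Your proposal is correct and follows precisely the approach the paper intends: the corollary is stated as ``a simple consequence'' of the preceding theorem, and the paper gives no further argument, so the deduction you spell out---substituting cocycle representatives into the identity so that only the $\delta^\varepsilon(f^m\circ g^n)$ term survives, and concluding that $g^n\smile f^m-(-1)^{mn}f^m\smile g^n$ is a coboundary---is exactly what is being left to the reader. Your sign bookkeeping is also correct.
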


\section{Extensions of $B$-algebras}

Suppose that $X$ is a $B$-algebra  with $\varepsilon_X:B\to X$ and that there exists a surjective morphism of $B$-algebras $\pi:X\to A$ such that $ker(\pi)^2=0$. Let $M=ker(\pi)$. We require that the $B$-algebra structure induced on $A$ by the map $\pi\circ\varepsilon_X$ coincides with that defined by the map $\varepsilon$. Consider $s:A\to X$ a $k$-linear map such that $\pi s=id_A$.
 Then $M$ is an $A$-bimodule with the multiplication given by
$$am=s(a)m,\; \; \; \; ma=ms(a),$$
for all $m\in M$ and $a\in A$. One can notice that this action does not depend on the choice of the section $s$. Moreover, for all $\alpha \in B$ and all $m\in M$ we have
$$\varepsilon(\alpha)m=m\varepsilon(\alpha)=\varepsilon_X(\alpha)m.$$
As a $k$-vector space we obviously have that $X=s(A)\oplus M$ (that is $s(A)+M=X$ and $s(A)\cap M=0$).

Because of Proposition 2.1 from \cite{sta}, we know that a $B$-algebra structure on $X$ is the same as an associative family of products $m_{\alpha,X}:X\otimes X\to X$ where  $m_{\alpha,X}(x\otimes y)=\varepsilon_X(\alpha)xy$. Since $\pi:X\to A$ is a morphism o $B$-algebras we must have that
 $$\pi(m_{\alpha,X}((s(a)+m)\otimes (s(b)+n)))=m_{\alpha}(\pi(s(a)+m)\otimes \pi(s(b)+n))=\varepsilon(\alpha)ab.$$
Using this and the linearity of the product we get
\begin{eqnarray*}
&&m_{\alpha,X}((s(a)+m)\otimes (s(b)+n))=\\
&&\varepsilon_X(\alpha)(s(a)s(b)+s(a)n+ms(b))=\\
&&s(\varepsilon(\alpha)ab)+\varepsilon(\alpha)an+mb\varepsilon(\alpha) +\varepsilon_X(\alpha)s(a)s(b)-s(\varepsilon(\alpha)ab).
\end{eqnarray*}
One can see that the $k$-linear map $c_s:A\otimes A\otimes B\to M$  defined by
$$c_s(a\otimes b\otimes \alpha)=\varepsilon_X(\alpha)s(a)s(b)-s(\varepsilon(\alpha)ab)$$
is a 2-cocycle. Moreover, if $t:A\to X$ is another section for $\pi$ then
$$\delta^{\varepsilon}_1(s-t)=c_s-c_t.$$
To summarize we have the following result
\begin{lemma} Let $X$ be a $B$-algebra and $\pi:X\to A$ a surjective morphism of $B$-algebras such that $M^2=0$ (where $M=ker(\pi)$). Then $\widehat{c_s}\in H^2((A,B,\varepsilon); M)$ does not depend on the choice of the section $s$. We will denote this element by $c_{X,\pi}$.
\label{lemma4}
\end{lemma}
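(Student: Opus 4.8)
The plan is to verify two assertions: first, that for a fixed section $s$ the map $c_s$ is genuinely a $2$-cocycle for the secondary Hochschild complex of $(A,B,\varepsilon)$ with coefficients in $M$; and second, that changing the section alters $c_s$ only by a coboundary, so that the class $\widehat{c_s} \in H^2((A,B,\varepsilon);M)$ is independent of $s$. Both computations are driven entirely by the associativity of the multiplication on $X$ and by the explicit formulas for $\delta^\varepsilon_1$ and $\delta^\varepsilon_2$ recalled in Section 1.

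First I would write out $\delta^\varepsilon_2(c_s)$ explicitly. By definition,
$$\delta^\varepsilon_2(c_s)\left(\otimes\left(\begin{array}{ccc} a_1 & b_{1,2} & b_{1,3}\\ 1 & a_2 & b_{2,3}\\ 1 & 1 & a_3\end{array}\right)\right)$$
equals
$$a_1\varepsilon(b_{1,2}b_{1,3})\,c_s(a_2\otimes a_3\otimes b_{2,3}) - c_s(a_1a_2\varepsilon(b_{1,2})\otimes a_3\otimes b_{1,3}b_{2,3}) + c_s(a_1\otimes a_2a_3\varepsilon(b_{2,3})\otimes b_{1,2}b_{1,3}) - c_s(a_1\otimes a_2\otimes b_{1,2})\,a_3\,\varepsilon(b_{1,3}b_{2,3}).$$
Substituting $c_s(x\otimes y\otimes \alpha)=\varepsilon_X(\alpha)s(x)s(y)-s(\varepsilon(\alpha)xy)$ and using that the $A$-bimodule action on $M$ is $am=s(a)m$, $ma=ms(a)$, together with the identity $\varepsilon(\alpha)m=\varepsilon_X(\alpha)m$ in $M$, each of the four terms expands into a difference of two pieces. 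The key manipulation is to recognize the ``$s(\cdots)s(\cdots)s(\cdots)$'' triple products: the associativity of $m_{\alpha,X}$ (Proposition 2.1 of \cite{sta} guarantees $X$ being a $B$-algebra is equivalent to the generalized associativity of the family $m_{\alpha,X}$) forces the two ways of bracketing $\varepsilon_X(b_{1,2}b_{1,3}b_{2,3})s(a_1)s(a_2)s(a_3)$ to agree, and this is exactly what makes the eight resulting terms cancel in pairs. I would organize the bookkeeping by noting that terms with an ``$s$ of a product'' pair off with the ``$s(a_i)$ times $c_s$'' contributions coming from the bimodule action, while the genuinely triple $s$-products cancel via associativity on $X$.

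Next, for the second assertion, let $t:A\to X$ be another $k$-linear section of $\pi$. Since $\pi(s(a)-t(a))=0$, the difference $s-t$ takes values in $M$, so $s-t \in C^1((A,B,\varepsilon);M)=\mathrm{Hom}_k(A\otimes B,M)$ (after the harmless identification, since a $1$-cochain here is a map $A\otimes B^{\otimes 0}\to M$ — one should read $s-t$ composed with the appropriate projection; more precisely the relevant $1$-cochain is $\alpha\mapsto$ something, but the statement in the excerpt writes $\delta^\varepsilon_1(s-t)$, so I follow that convention). Then I would compute $\delta^\varepsilon_1(s-t)$ from the formula for $\delta^\varepsilon_1$ and simplify $c_s - c_t = \big(\varepsilon_X(\alpha)s(a)s(b)-s(\varepsilon(\alpha)ab)\big) - \big(\varepsilon_X(\alpha)t(a)t(b)-t(\varepsilon(\alpha)ab)\big)$, writing $s(a)=t(a)+(s-t)(a)$ and expanding. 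The cross terms $s(a)\cdot(s-t)(b)$ and $(s-t)(a)\cdot t(b)$ become, via the bimodule action, $a\cdot(s-t)(b)$ and $(s-t)(a)\cdot b$, which are precisely the outer terms of $\delta^\varepsilon_1$; the term $(s-t)(\varepsilon(\alpha)ab)$ matches the middle term; and the quadratic term $(s-t)(a)(s-t)(b)$ vanishes because $M^2=0$. This yields $c_s-c_t=\delta^\varepsilon_1(s-t)$, hence $\widehat{c_s}=\widehat{c_t}$ in $H^2$.

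I expect the main obstacle to be purely organizational rather than conceptual: keeping track of the various $\varepsilon$ and $\varepsilon_X$ factors and the central elements $\varepsilon(b_{i,j})$ while expanding $\delta^\varepsilon_2(c_s)$, and matching each cancelling pair correctly. The one genuine input beyond linear algebra is the associativity of the product on $X$ in its ``$B$-twisted'' form; everything else — the well-definedness of the $A$-bimodule structure on $M$ independent of the section, the relation $\varepsilon(\alpha)m=\varepsilon_X(\alpha)m$, and the decomposition $X=s(A)\oplus M$ — has already been established in the paragraphs preceding the lemma, so I may invoke it directly. Once both computations are done, the lemma follows: $c_s$ defines a class in $H^2((A,B,\varepsilon);M)$, and that class is independent of $s$, so we may legitimately write $c_{X,\pi}:=\widehat{c_s}$.
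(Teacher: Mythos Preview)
Your proposal is correct and follows essentially the same approach as the paper: the paper records, in the paragraphs immediately preceding the lemma, precisely the two facts you plan to verify---that $c_s$ is a $2$-cocycle and that $\delta^\varepsilon_1(s-t)=c_s-c_t$---leaving the computations to the reader, and you simply fill in those computations. One small correction: $C^1((A,B,\varepsilon);M)=\mathrm{Hom}_k(A,M)$, not $\mathrm{Hom}_k(A\otimes B,M)$, since $B^{\otimes 0}=k$; you catch this yourself in the parenthetical, so the argument is unaffected.
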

Next we prove that $c_{X,\pi}$ depends only on the isomorphism class of the extension $0\to M\to X\stackrel{\pi}{\rightarrow} A\to 0$.
\begin{proposition}\label{equiv}
Let $X_1$ and $X_2$ be two $B$-algebras, $\pi_i:X_i\to A$ surjective morphisms of $B$-algebras such that $(ker(\pi_i))^2=0$. Moreover assume that there exists an isomorphism of $B$-algebras $F:X_1\to X_2$ such that $\pi_2\circ F=\pi_1$. Under the identification $M_2=\ker(\pi_2)=F(M_1)$ we have that $c_{X_2,\pi_2}=F^*(c_{X_1,\pi_1})\in H^2((A,B,\varepsilon); M_2)$.
\end{proposition}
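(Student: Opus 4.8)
The plan is to produce explicit sections for $\pi_1$ and $\pi_2$ that are intertwined by $F$, and then to show that the corresponding $2$-cocycles already correspond under $F$ at the cochain level. First I would fix a $k$-linear section $s_1\colon A\to X_1$ of $\pi_1$ and set $s_2:=F\circ s_1$. Since $\pi_2\circ F=\pi_1$, we get $\pi_2\circ s_2=\pi_1\circ s_1=\mathrm{id}_A$, so $s_2$ is a section of $\pi_2$. Also, because $F$ is an isomorphism, $F^{-1}(M_2)=\{x\in X_1:\pi_2(F(x))=0\}=\{x\in X_1:\pi_1(x)=0\}=M_1$, which justifies the identification $M_2=F(M_1)$ and shows that $F|_{M_1}\colon M_1\to M_2$ is a $k$-linear isomorphism.

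Next I would check that $F|_{M_1}$ is an isomorphism of $A$-bimodules. Computing the $A$-action on $M_2$ by means of the section $s_2=F\circ s_1$ (which, as noted before Lemma \ref{lemma4}, is legitimate since the action is independent of the section), for $a\in A$ and $m\in M_1$ we have $F(a\cdot_1 m)=F(s_1(a)m)=F(s_1(a))F(m)=s_2(a)F(m)=a\cdot_2 F(m)$, and symmetrically on the right; compatibility with the $\varepsilon(\alpha)$-action is automatic from $F\circ\varepsilon_{X_1}=\varepsilon_{X_2}$. Hence post-composition with $F|_{M_1}$ is a morphism of complexes $C^\bullet((A,B,\varepsilon);M_1)\to C^\bullet((A,B,\varepsilon);M_2)$ commuting with $\delta^\varepsilon$, and this is precisely the map inducing the homomorphism $F^*$ on $H^2$ that appears in the statement.

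Finally I would compute $c_{s_2}$ directly. For $a,b\in A$ and $\alpha\in B$,
$$c_{s_2}(a\otimes b\otimes\alpha)=\varepsilon_{X_2}(\alpha)s_2(a)s_2(b)-s_2(\varepsilon(\alpha)ab).$$
Using that $F$ is a morphism of $B$-algebras, i.e. $F(xy)=F(x)F(y)$ and $F\circ\varepsilon_{X_1}=\varepsilon_{X_2}$, the right-hand side equals
$$F\big(\varepsilon_{X_1}(\alpha)s_1(a)s_1(b)\big)-F\big(s_1(\varepsilon(\alpha)ab)\big)=F\big(c_{s_1}(a\otimes b\otimes\alpha)\big).$$
Thus $c_{s_2}=(F|_{M_1})\circ c_{s_1}$ as cochains, and passing to cohomology classes (using Lemma \ref{lemma4} for the well-definedness of both sides) yields $c_{X_2,\pi_2}=\widehat{c_{s_2}}=F^*(\widehat{c_{s_1}})=F^*(c_{X_1,\pi_1})$.

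The computation is essentially routine. The only step that needs a little care is the verification that $F|_{M_1}$ respects the $A$-bimodule structures — that is, that the two module structures are genuinely transported by $F$ — and this is exactly where the hypotheses $\pi_2\circ F=\pi_1$ and ``$F$ is a morphism of $B$-algebras'' are used; everything else is a direct substitution into the defining formula for $c_s$.
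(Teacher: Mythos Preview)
Your proof is correct and follows exactly the same approach as the paper's own proof, which simply observes that if $s$ is a section for $\pi_1$ then $Fs$ is a section for $\pi_2$, and invokes Lemma~\ref{lemma4}. You have supplied the details the paper omits (in particular the verification that $F|_{M_1}$ is an $A$-bimodule isomorphism and the explicit computation $c_{s_2}=F\circ c_{s_1}$), but the strategy is identical.
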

\begin{proof}
The proof follows from Lemma \ref{lemma4} and the fact that if $s:A\to X_1$ is a section for $\pi_1$ then $Fs:A\to X_2$ is a section for $\pi_2$.
\end{proof}

In addition, for any $A$-bimodule $M$ such that $\varepsilon(\alpha)m=m\varepsilon(\alpha)$ and for any cocycle $c\in C^2((A, B,\varepsilon); M)$
we can define a $B$-algebra $X$ and a surjective morphism of $B$-algebras  $\pi:X\rightarrow A$ such that $M=ker(\pi)$, $M^2=0$ and $\pi\circ\varepsilon_X=\varepsilon$. To see this we use Proposition 2.1  from \cite{sta} to define a family of products $m_{\alpha,X}:X\otimes X\to X$ as follows. First, we take $X=A\oplus M$, as a $k$-vector space. Second, we define $$m_{\alpha, X}((a+m)\otimes(b+n))=\varepsilon(\alpha)ab+\varepsilon(\alpha)an+mb\varepsilon(\alpha)+c(a\otimes b\otimes \alpha).$$
One can check without any difficulty that $(X, m_{1, X})$ is a $k$-algebra, with unit $1_X=1_A-c(1_A\otimes 1_A\otimes 1_B),$ and that for all $\alpha, \beta\in B$ and $q\in k$ we have $m_{\alpha+\beta, X}=m_{\alpha, X}+m_{\beta, X}$ and $m_{q\alpha, X}=qm_{\alpha, X}.$ The third condition of Proposition 2.1, $m_{\beta\gamma, X}(m_{\alpha, X}\otimes id)=m_{\alpha\beta, X}(id\otimes m_{\gamma, X})$ is equivalent to $c$ being a cocycle and it is satisfied, so $X$ is a $B$-algebra. We have $\varepsilon_X:B\to X$ defined by
$$\varepsilon_X(\alpha)=\varepsilon(\alpha)-2\varepsilon(\alpha)c(1_A \otimes 1_A\otimes 1_B)+c(1_A\otimes 1_A\otimes \alpha).$$

Third, it is clear that the canonical projection $\pi: X\rightarrow A$ is a surjective morphism of $k$-algebras such that $ker(\pi)=M$, $M^2=0$, and that $\pi\circ\varepsilon_X(\alpha)=\varepsilon(\alpha)$. To see that $\pi$ is a morphism of $B$-algebras note that for all $\alpha \in B, a\in A$, and $m\in M$ we have
\begin{eqnarray*}
&&\pi(\alpha(a+m))=\pi(m_{1, X}(\varepsilon_X(\alpha)\otimes(a+m)))=\\
&=&\pi( m_{1, X}(\varepsilon(\alpha)-2\varepsilon(\alpha)c(1_A\otimes 1_A\otimes 1_B)+c(1_A\otimes 1_A\otimes\alpha))\otimes(a+m)))\\ &=&\varepsilon(\alpha) a\\
&=&\alpha\pi(a+m).
\end{eqnarray*}

Finally, we show that the construction of $X$ depends only on the cohomology class of the cocycle $c\in C^2((A, B, \varepsilon); M)$. For this let $c_1, c_2$ be two cocycles in $C^2((A, B, \varepsilon); M)$ such that $c_1-c_2=\delta_1^\varepsilon f$, where $f:A\rightarrow M$ is $k$-linear. Denote by $X_1$ and $X_2$ the $B$-algebras defined by the cocycles $c_1$ and $c_2$, by $m_{\alpha, X_1}$ and $m_{\alpha, X_2}$ their corresponding families of products, and by $\pi_1$ and $\pi_2$ the canonical projections of $X_1$ and $X_2$ onto $A$. Note that by construction $X_1=X_2=A\oplus M$ as $k$-vector spaces. Then the map $F: X_1\rightarrow X_2$, defined by the formula $F(a+m)=a+m+f(a)$ is an isomorphism of $B$-algebras such that $\pi_2\circ F=\pi_1$. It is easy to see that $F$ is an isomorphism of $k$-algebras such that $\pi_2\circ F=\pi_1$, so we will only prove that $F$ is $B$-linear. Indeed, for $\alpha\in B, a\in A$ and $m\in M$ we have
\begin{eqnarray*}
&&F(\alpha (a+m))=F(m_{1, X_1}(\varepsilon_{X_1}(\alpha)\otimes (a+m)))\\
&&=F(m_{1, X_1}(\varepsilon(\alpha)-2\varepsilon(\alpha)c_1(1\otimes 1\otimes 1)+c_1(1\otimes 1\otimes\alpha))\otimes(a+m)))\\
&&=F(\varepsilon(\alpha)a+\varepsilon(\alpha)m-2\varepsilon(\alpha)c_1(1\otimes 1\otimes 1)a+c_1(1\otimes 1\otimes\alpha)a+c_1(\varepsilon(\alpha)\otimes a\otimes 1))\\
&&=\varepsilon(\alpha)a+\varepsilon(\alpha)m-2\varepsilon(\alpha)c_1(1\otimes 1\otimes 1)a+c_1(1\otimes 1\otimes\alpha)a+c_1(\varepsilon(\alpha)\otimes a\otimes 1)\\
&&+f(\varepsilon(\alpha)a).
\end{eqnarray*}

On the other hand we have
\begin{eqnarray*}
&&\alpha F(a+m)=m_{1,X_2}(\varepsilon_{X_2}(\alpha)\otimes(a+m+f(a)))\\
&&=m_{1, X_2}((\varepsilon(\alpha)-2\varepsilon(\alpha)c_2(1\otimes 1\otimes 1)+c_2(1\otimes 1\otimes\alpha))\otimes (a+m+f(a)))\\
&&=\varepsilon(\alpha)a+\varepsilon(\alpha)m+\varepsilon(\alpha)f(a)-2\varepsilon(\alpha)c_2(1\otimes 1\otimes 1)a+c_2(1\otimes 1\otimes\alpha)a\\&&+c_2(\varepsilon(\alpha)\otimes a\otimes 1).
\end{eqnarray*}

Thus we get $$F(\alpha (a+m))-\alpha F(a+m)=2\varepsilon(\alpha)(c_2(1\otimes 1\otimes 1)-c_1(1\otimes 1\otimes 1))a+$$
$$+(c_1(1\otimes 1\otimes\alpha)-c_2(1\otimes 1\otimes\alpha))a+(c_1(\varepsilon(\alpha)\otimes a\otimes 1)-c_2(\varepsilon(\alpha)\otimes a\otimes 1))-\varepsilon(\alpha)f(a)+f(\varepsilon(\alpha)a). $$

Since $c_1-c_2=\delta_1^\varepsilon f$ we have the following identities $$c_2(1\otimes 1\otimes 1)-c_1(1\otimes 1\otimes 1)=-f(1)$$
$$c_1(1\otimes 1\otimes\alpha)-c_2(1\otimes 1\otimes\alpha)=2\varepsilon(\alpha)f(1)-f(\varepsilon(\alpha))$$
$$c_1(\varepsilon(\alpha)\otimes a\otimes 1)-c_2(\varepsilon(\alpha)\otimes a\otimes 1)=\varepsilon(\alpha)f(a)-f(\varepsilon(\alpha)a)+f(\varepsilon(\alpha))a.$$

Therefore we obtain that $F(\alpha (a+m))-\alpha F(a+m)=0$, so $F$ is an isomorphism of $B$-algebras such that $\pi_2\circ F=\pi_1$.

Assume now that we have an extension given by the following data: a morphism of $k$-algebras $\varepsilon_X:B\to X$; a surjective morphism of $B$-algebras $\pi:X\to A$ such that $ker(\pi)^2=0$, $M=ker(\pi)$; $\pi\circ\varepsilon_X=\varepsilon$; and a $k$-linear map $s:A\to X$ such that $\pi s=id_A$. If we consider the cocycle $c_s\in C^2((A, B,\varepsilon); M)$ defined earlier and then we consider the extension associated to this cocycle then it is not hard to see that we obtain an extension equivalent to the initial one. Similarly,  given an $A$-bimodule $M$ such that $\varepsilon(a)m=m\varepsilon(a)$ and  a cocycle $c\in C^2((A, B, \varepsilon); M)$ we construct the extension associated to $c$. If we now take the cocycle $c_s$ determined by a section $s:A\to X$ with $\pi s=id_A$ then we have that $c_s-c=\delta^\varepsilon_1 u$, where $u:A\to M$ is the $k$-linear map induced by $s$ on $M$. Indeed, we have that $c_s(a\otimes b\otimes\alpha)= c(\varepsilon(\alpha)\otimes ab\otimes 1)+c(1\otimes 1\otimes\alpha)ab-2\varepsilon(\alpha)c(1\otimes 1\otimes 1)ab+\varepsilon(\alpha)c(a\otimes b\otimes 1)+ \delta u(a\otimes b\otimes\alpha)$ for all $a, b\in A$ and $\alpha\in B$. The key observation here is that the cocycle condition implies that $c(a\otimes b\otimes\alpha)=c(\varepsilon(\alpha)\otimes ab\otimes 1)+c(1\otimes 1\otimes\alpha)ab-2\varepsilon(\alpha)c(1\otimes 1\otimes 1)ab+\varepsilon(\alpha)c(a\otimes b\otimes 1).$

The above considerations allow us to conclude that $H^2((A, B, \varepsilon); M)$ can be naturally identified with the equivalence classes of extensions of $B$-algebras of $A$ by $M$, for any $A$-bimodule $M$ such that $\varepsilon(\alpha)m=m\varepsilon(\alpha)$.

\section{A Hodge Type Decomposition of the Secondary Cohomology}
In this section we will assume that $A$ is commutative, $k$ is a field of characteristic 0, and $M$ is a symmetric $A$-bimodule ($i.e.$ $ am=ma$ for all $a\in A$ and $m\in M$). We denote by $kS_n$ the group algebra of the group of permutations of $n$ objects. Under these conditions Barr proved in \cite{B}  that $kS_n$ operates on the
$n$-cochains, $C^n(A, M)$, of the complex defining the Hochschild cohomology of $A$ with coefficients in $M$ and that there is a non-central idempotent $e_n\in\mathbb{Q}S_n$ such that $\delta_n(e_nf)=e_{n+1}(\delta_n f)$. This implies that the Hochschild
complex is a direct sum of two sub-complexes, corresponding to $e_n$ and $1-e_n$. Barr's ideas were extended in \cite{gs2} by  Gerstenhaber and  Schack
who showed that $\mathbb{Q}S_n$ contains $n$ mutually orthogonal idempotents $e_n(1), e_n(2), \dots, e_n(n)$ which sum to the identity and with the property that for each cochain $f\in C^n(A, M)$ we have $\delta_n(e_n(k)f)=e_{n+1}(k)(\delta _nf).$  From this it follows that the Hochschild cohomology $H^n(A, M)$ has a Hodge type decomposition into a direct sum of $n$ summands.  Barr's original idempotent $e_n$ is $e_n(1)$ and the idempotents and the decomposition are labeled BGS (Barr-Gerstenhaber-Schack). The action of $S_n$ on the $n$-cochains $C^n(A, M)$ is given by $$(\pi f)(a_1\otimes a_2\otimes\dots\otimes a_n)=(f\pi^{-1})(a_1\otimes a_2\otimes\dots\otimes a_n)=f(a_{\pi(1)}\otimes a_{\pi(2)}\otimes\dots\otimes a_{\pi(n)}).$$

It is not hard to see that $S_n$ acts on the $n$-cochains of the secondary cohomology. Indeed, for $\pi\in S_n$ and $f\in C^n((A, B, \varepsilon); M)$ we define the left action of $S_n$ by setting
\begin{center} $(\pi f)\left(\displaystyle\otimes
\left(
\begin{array}{cccccccc}
 a_{1}& b_{1,2} &  ...&b_{1,n}\\
1 & a_{2}       & ...&b_{2,n}\\
%1& 1  & ...     &b_{3,n}\
. & .       &...&.\\
1 & 1& ...&a_{n}\\
\end{array}
\right)\right)=f\left(\displaystyle\otimes
\left(
\begin{array}{cccccccc}
 a_{\pi(1)}& b_{\pi(1, 2)} &  ...&b_{\pi(1, n)}\\
1 & a_{\pi(2)}       & ...&b_{\pi(2, n)}\\
%1& 1  & ...     &b_{3,n}\\
. & .       &...&.\\
1& 1& ...&a_{\pi(n)}\\
\end{array}
\right)\right),$ \end{center}
where, for each $1\leq i < j\leq n,$ the element $b_{\pi(i, j)}$ is equal to $b_{\pi(i), \pi(j)}$ if $\pi(i)<\pi(j)$ and equal to $b_{\pi(j),\pi(i)}$ if $\pi(j)<\pi(i)$. Similarly, one defines the right action of $S_n$ on $C^n((A, B, \varepsilon); M)$ by using $\pi^{-1}$. It is important to note that the order
of the elements $a_{\pi(1)}, a_{\pi(2)}, \dots, a_{\pi(n)}$ on the diagonal of the above tensor matrix determines completely the positions of $b_{\pi(i, j)}$.

We want to show that for $f\in C^n((A, B, \varepsilon); M)$ we have that $\delta^{\varepsilon}_n(e_n(k)f)=e_{n+1}(k)(\delta^{\varepsilon}_n f)$. This will imply that the secondary cohomology $H^\bullet((A, B, \varepsilon); M)$ has a Hodge type decomposition. For this we use that the BGS idempotents $e_n(1), e_n(1),\dots ,e_n(n)$ are polynomials, with rational coefficients, of the total shuffle operator.

Following Barr \cite{B}, for $0<r<n$ and $\pi\in S_n$ we say that $\pi$ is a pure shuffle of $r$ through $n-r$ if $\pi(1)<\dots<\pi(r)$ and $\pi(r+1)<\dots <\pi(n)$. Then the $r^{th}$ shuffle operator is $s_{r, n-r}=\sum\limits_{\substack{\mathrm{pure} \\ \mathrm{shuffles}}}(-1)^\pi\pi$, where $(-1)^\pi$ is the sign of $\pi$. The total shuffle operator is defined by $s_n=\sum\limits_{1\leq r\leq n-1}s_{r, n-r}$ and satisfies $\delta_n(s_nf)=s_{n+1}(\delta_n f)$, for all $f\in C^n(A, M)$. Moreover, Gerstenhaber and Schack showed in \cite{gs2} that the minimal polynomial of $s_n$ over $\mathbb{Q}$ is $\mu_n(x)=\prod\limits_{1\leq i\leq n}[x-(2^i-2)]$. They defined \begin{center} $e_n(k)=\prod\limits_{\substack{1\leq i\leq n\\ i\neq k}}(\lambda_k-\lambda_i)^{-1}\prod\limits_{\substack{1\leq i\leq n\\i\neq k}}(s_n-\lambda_i), $\;$ \mathrm{where} $\;$ \lambda_i=2^i-2.$\end{center}

 We want to justify that for every $f\in C^n((A, B, \varepsilon), M)$ we have $$\left(\delta_n^\varepsilon(s_n f)-s_{n+1}(\delta_n^\varepsilon f)\right)\left(\otimes\left(\begin{array}{cccccccc}
 a_{1}& b_{1,2} &  ...&b_{1,n+1}\\
1 & a_{2}       & ...&b_{2,n+1}\\
%1& 1  & ...     &b_{3,n}\
. & .       &...&.\\
1 & 1& ...&a_{n+1}\\
\end{array}
\right)\right)=0,$$ for $a_1, a_2, \dots, a_{n+1}\in A$ and $b_{ij}\in B$, $1\leq i<j\leq n+1$. The expansion of the left side shows that the
identity holds for $b_{i,j}=1$, a direct consequence of $\delta_n(s_n \bar{f})-s_{n+1}(\delta_n \bar{f})=0$ (where $\bar{f}$ is obtained from $f$ by taking $b_{i,j}=1$). This means that the diagonals of the tensor sub-matrices of types $T_1^{n+1}, T_n^0,$ and $M_{i, i+1}^{1,n}$ in the expansion of $\delta_n^\varepsilon(s_n f)-s_{n+1}(\delta_n^\varepsilon f)$ appear in identical pairs and with opposite signs. But, as a consequence of way we defined the action of $S_n$ on the secondary cochains and of the definition of $\delta_n^\varepsilon$,  the order
of the elements $a_{\pi(1)}, a_{\pi(2)}, \dots, a_{\pi(n+1)}$ and of the products $a_{\pi(i)}a_{\pi(j)}\varepsilon(b_{\pi(i,j)})$ on the diagonal of the above tensor matrices determines completely the positions of all $b_{\pi(i, j)}$ and their products in $T_1^{n+1}, T_n^0,$ and $M_{i, i+1}^{1,n}$.
This implies that $\delta_n^\varepsilon(s_n f)=s_{n+1}(\delta_n^\varepsilon f)$.

In addition, because $\mu_n(s_n)=0$, we have the identity \begin{center} $\delta_n^\varepsilon(\mu_n (s_n) f)=\prod\limits_{\substack{1\leq i\leq n}}(s_{n+1}-\lambda_i)(\delta_n^\varepsilon f)=0,$\end{center} so we get that
\begin{center} $\delta_n^\varepsilon(e_n(k)f)=\prod\limits_{\substack{1\leq i\leq n\\i\neq k}}(\lambda_k-\lambda_i)^{-1}\prod\limits_{\substack{1\leq i\leq n\\i\neq k}}(s_{n+1}-\lambda_i)(\delta_n^\varepsilon f)=$\end{center}

\begin{center} $=\prod\limits_{\substack{1\leq i\leq n+1\\ i\neq k}}(\lambda_k-\lambda_i)^{-1}\prod\limits_{\substack{1\leq i\leq n\\ i\neq k}}(s_{n+1}-\lambda_i)
(\lambda_k-\lambda_{n+1}+s_{n+1}-\lambda_k)(\delta_n^\varepsilon f)=e_{n+1}(k)(\delta_n^\varepsilon f).$ \end{center}

Adopting the notations from \cite{gs2}, each idempotent $e_n(k)$ determines a submodule
of $C^n((A, B, \varepsilon); M)$, namely $$C^{k, n-k}((A, B, \varepsilon); M)=e_n(k)C^n((A, B, \varepsilon); M).$$
By setting $e_n(k)=0$ if $k>n$, $e_n(0)=0$ if $n\neq 0$, and $e_0(0)=1$ we have that the complex defining the secondary cohomology
decomposes as \begin{center} $C^\bullet ((A, B, \varepsilon); M)=\coprod\limits_{k\geq 0}C^{k, \bullet-k}((A, B, \varepsilon); M)=\coprod\limits_{k\geq 0}e_n(k)C^\bullet((A, B, \varepsilon); M).$ \end{center}
Denoting by $H^{k, \bullet-k}((A, B, \varepsilon); M)$ the homology of the complex $C^{k, \bullet-k}((A, B, \varepsilon); M)$  we have the following

\begin{theorem} If $\varepsilon: B\rightarrow A$ is a morphism of commutative $k$-algebras, $\mathbb{Q}\subset k$, and $M$ is a symmetric
$A$-bimodule then \begin{center} $H^\bullet((A, B, \varepsilon); M)=\coprod\limits_{k\geq 0}H^{k, \bullet-k}((A, B, \varepsilon); M)$. \end{center}

\end{theorem}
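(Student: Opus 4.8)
The plan is to derive the decomposition of the cohomology as a formal consequence of a chain-level decomposition of the complex $C^\bullet((A,B,\varepsilon);M)$ into orthogonal subcomplexes, exactly as in the classical Barr--Gerstenhaber--Schack setting. All the heavy lifting has been front-loaded into the paragraphs preceding the statement: there we established that $S_n$ acts on the secondary $n$-cochains, that the total shuffle operator $s_n$ satisfies the intertwining relation $\delta_n^\varepsilon(s_n f)=s_{n+1}(\delta_n^\varepsilon f)$, and hence (since each BGS idempotent $e_n(k)$ is a polynomial in $s_n$ with rational coefficients, and $\mathbb{Q}\subset k$) that $\delta_n^\varepsilon(e_n(k)f)=e_{n+1}(k)(\delta_n^\varepsilon f)$ for every $k$ and every $f$. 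So what remains is purely homological bookkeeping.

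First I would recall that the $e_n(k)$, $1\le k\le n$, are mutually orthogonal idempotents in $\mathbb{Q}S_n$ summing to $1$; with the convention $e_n(k)=0$ for $k>n$, $e_n(0)=0$ for $n\neq 0$, and $e_0(0)=1$, the maps $e_n(k):C^n\to C^n$ give, for each fixed $k\ge 0$, a graded direct summand $C^{k,\bullet-k}((A,B,\varepsilon);M)=e_n(k)C^n((A,B,\varepsilon);M)$ of the graded vector space $C^\bullet$. Next I would invoke the intertwining identity $\delta_n^\varepsilon e_n(k)=e_{n+1}(k)\delta_n^\varepsilon$ to conclude that $\delta_n^\varepsilon$ restricts to a differential $C^{k,n-k}\to C^{k,n+1-k}$ on each summand; thus $(C^{k,\bullet-k},\delta^\varepsilon)$ is a genuine subcomplex, and $C^\bullet((A,B,\varepsilon);M)=\coprod_{k\ge 0}C^{k,\bullet-k}((A,B,\varepsilon);M)$ is a direct sum decomposition in the category of cochain complexes. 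Since homology commutes with finite (indeed arbitrary) direct sums of complexes, passing to cohomology yields $H^\bullet((A,B,\varepsilon);M)=\coprod_{k\ge 0}H^{k,\bullet-k}((A,B,\varepsilon);M)$, which is the assertion. One should note that in each fixed total degree $n$ the sum is finite, $e_n(k)=0$ once $k>n$, so there are no convergence issues.

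Honestly, there is no real obstacle left at this point: the substantive content — that the $S_n$-action is well defined on the matrix-style cochains and that $s_n$ (hence every $e_n(k)$) commutes with $\delta^\varepsilon$ up to the shift $n\mapsto n+1$ — was already carried out before the statement. The only thing requiring a word of care is the edge-case conventions for $e_n(k)$ when $k=0$ or $k>n$, so that the indexing set $\{k\ge 0\}$ makes sense uniformly across all degrees and the decomposition of $C^\bullet$ is literally an equality of complexes; this is handled exactly as in \cite{gs2}. I would therefore present the proof as: (i) orthogonal idempotents give a splitting of the graded module; (ii) the intertwining relation upgrades this to a splitting of complexes; (iii) homology of a direct sum is the direct sum of homologies. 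Consequently the theorem follows.
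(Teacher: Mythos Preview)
Your proposal is correct and follows essentially the same approach as the paper: the paper carries out exactly the preparatory work you describe (the $S_n$-action, the intertwining $\delta_n^\varepsilon s_n=s_{n+1}\delta_n^\varepsilon$, hence $\delta_n^\varepsilon e_n(k)=e_{n+1}(k)\delta_n^\varepsilon$, and the resulting splitting $C^\bullet=\coprod_{k\ge 0}C^{k,\bullet-k}$) in the paragraphs preceding the theorem, and then states the cohomological decomposition as an immediate consequence without further argument. Your explicit homological bookkeeping (idempotents split the module, intertwining upgrades this to a splitting of complexes, homology commutes with direct sums) simply spells out what the paper leaves implicit.
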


\section{Some Examples}
It was noticed in \cite{sta} that  there  exists a natural morphism $$\Phi_n:H^n((A,B,\varepsilon);M)\to H^n(A,M),$$
induced by the inclusion $i:A^{\otimes n}\to A^{\otimes n}\otimes B^{\otimes \frac{n(n-1)}{2}}$,
$$i_n(a_1\otimes ...\otimes a_n)=\displaystyle\otimes
\left(
\begin{array}{cccccccc}
 a_{1}& 1 &  ...&1&1\\
1 & a_{2}    &...&1&1\\
. & .       &...&.&.\\
1& 1& ...&a_{n-1}&1\\
1 & 1& ...&1&a_{n}\\
\end{array}
\right)
$$
In this section we will see that in general $\Phi_n$ is neither onto nor one to one.

First, notice that if $u:A\to M$ is $k$-linear such that $\delta^{\varepsilon}_1(u)=0$ then we must have that $a\varepsilon(\alpha)u(b)-u(ab\varepsilon(\alpha))+u(a)b\varepsilon(\alpha)=0$. This implies that $\Phi_1(u)$ is a derivation that is $B$-linear. Since, in general, not all $k$-derivations of $A$ are $B$-linear we get that $\Phi_1$ is not necessarily onto. We have the following result
\begin{proposition}
$$H^0((A,B,\varepsilon); M)= M^A,$$  $$H^1((A,B,\varepsilon); M)= Der_B(A,M)/Inn(A,M).$$
\end{proposition}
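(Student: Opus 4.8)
The plan is to compute the low-degree cohomology groups directly from the definition of the complex $(C^n((A,B,\varepsilon);M),\delta^{\varepsilon}_n)$, since for $n=0,1$ the formula for $\delta^{\varepsilon}_n$ simplifies dramatically (there are no $B$-variables at all when $n=0$, and exactly one when $n=1$).

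\textbf{The group $H^0$.} A $0$-cochain is just an element $m\in M=C^0((A,B,\varepsilon);M)$, and $C^1((A,B,\varepsilon);M)=Hom_k(A,M)$ since $\tfrac{1\cdot 0}{2}=0$. Unwinding the definition of $\delta^{\varepsilon}_0$ (here $T^0_1$ carries a single $a_1$ and no $b$'s), one gets $\delta^{\varepsilon}_0(m)(a_1)=a_1 m - m a_1$. Hence $H^0((A,B,\varepsilon);M)=\ker\delta^{\varepsilon}_0=\{m\in M\mid am=ma \text{ for all }a\in A\}=M^A$. This part is immediate and requires only that one read off the boundary formula in the edge case $n=0$.

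\textbf{The group $H^1$.} A $1$-cocycle $u\in Hom_k(A,M)$ satisfies, by the $n=1$ case of the defining formula for $\delta^{\varepsilon}_1$ applied to $T^0_2$ (which has entries $a_1,a_2$ and one $b_{1,2}$), the identity $a_1\varepsilon(b_{1,2})u(a_2)-u(\varepsilon(b_{1,2})a_1a_2)+u(a_1)a_2\varepsilon(b_{1,2})=0$ for all $a_1,a_2\in A$ and $b_{1,2}\in B$; this is exactly the identity already recorded in the paragraph preceding the Proposition. Setting $b_{1,2}=1$ shows $u$ is a $k$-derivation $A\to M$; the general case, together with $\varepsilon(B)\subset\mathcal Z(A)$ and $\varepsilon(\alpha)m=m\varepsilon(\alpha)$, shows $u(\varepsilon(\alpha)a)=\varepsilon(\alpha)u(a)$, i.e. $u$ is $B$-linear, so $\ker\delta^{\varepsilon}_1\subseteq Der_B(A,M)$; conversely any $B$-linear derivation plainly satisfies the displayed identity, giving equality $\ker\delta^{\varepsilon}_1=Der_B(A,M)$. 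For the coboundaries, note $\mathrm{Im}\,\delta^{\varepsilon}_0=\{a\mapsto am-ma\mid m\in M\}=Inn(A,M)$ by definition of the inner derivations. Therefore $H^1((A,B,\varepsilon);M)=Der_B(A,M)/Inn(A,M)$.

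I do not anticipate a genuine obstacle here: both computations are a matter of specializing the general boundary formula to $n=0$ and $n=1$ and invoking the hypotheses $\varepsilon(B)\subset\mathcal Z(A)$ and $\varepsilon(\alpha)m=m\varepsilon(\alpha)$. The only point deserving care is the verification that the cocycle condition for $n=1$, which a priori involves the extra variable $b_{1,2}$, is equivalent to being a $B$-linear derivation and not something strictly stronger; this is handled by the two-step argument above (first specialize $b_{1,2}=1$, then use the resulting derivation property plus centrality to absorb $\varepsilon(b_{1,2})$). One should also double-check that $Inn(A,M)$ is understood with the bimodule structure on $M$ that is part of the data, which is consistent with the convention used throughout the paper.
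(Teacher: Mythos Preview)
Your proposal is correct and is precisely the ``straightforward computation'' that the paper invokes but does not write out; you simply specialize $\delta^{\varepsilon}_0$ and $\delta^{\varepsilon}_1$ and identify the kernel and image, which is the only reasonable approach here. The one detail worth making explicit in your $H^1$ step is that $u(1)=0$ (from the derivation property), so that setting $a_1=1$ in the cocycle identity cleanly yields $u(\varepsilon(\alpha)a_2)=\varepsilon(\alpha)u(a_2)$.
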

\begin{proof} Straightforward computation.
\end{proof}

\begin{proposition} Let $\Phi_2: H^2((A,B,\varepsilon); M)\to H^2(A,M)$. If on $M$ we consider the $B$-bimodule structure induced by $\varepsilon$, then there exists an isomorphism
$$\chi: \frac{Der_k(B,M)}{\varepsilon^*(Der_k(A,M))}\to ker(\Phi_2)$$
determined by $\chi(u)(a\otimes b\otimes \alpha)=au(\alpha)b$.
\label{prop6}
\end{proposition}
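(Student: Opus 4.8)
The plan is to analyze the kernel of $\Phi_2$ at the cochain level. A class in $H^2((A,B,\varepsilon);M)$ dies under $\Phi_2$ precisely when, after pulling back along the inclusion $i_2$, the resulting Hochschild $2$-cocycle is a coboundary; that is, $c\circ i_2 = \delta_1(g)$ for some $k$-linear $g:A\to M$. After subtracting $\delta_1^\varepsilon$ of a lift of $g$ (using that $\delta^\varepsilon$ is compatible with $i_2$ on the nose when the $B$-variables are all $1$, since $\delta^\varepsilon$ reduces to $\delta$ in that case), I may assume I am working with a cocycle $c$ such that $c\circ i_2 = 0$, i.e. $c(a\otimes b\otimes 1)=0$ for all $a,b\in A$. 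So the first step is: \emph{every class in $\ker(\Phi_2)$ has a representative $c$ with $c(a\otimes b\otimes 1)=0$.}

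The second step is to exploit the cocycle condition $\delta_2^\varepsilon(c)=0$ to pin down the shape of such a normalized $c$. Writing out $\delta_2^\varepsilon(c)$ on the tensor matrix with entries $a_1,a_2,a_3$ on the diagonal and $b_{1,2},b_{1,3},b_{2,3}$ off-diagonal, and using $c(-\otimes-\otimes 1)=0$ together with the observation already recorded in Section 3 that the cocycle identity forces $c(a\otimes b\otimes\alpha)=c(\varepsilon(\alpha)\otimes ab\otimes 1)+c(1\otimes 1\otimes\alpha)ab-2\varepsilon(\alpha)c(1\otimes1\otimes1)ab+\varepsilon(\alpha)c(a\otimes b\otimes 1)$, I expect everything to collapse to $c(a\otimes b\otimes\alpha)=c(1\otimes1\otimes\alpha)ab$. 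Setting $v(\alpha):=c(1\otimes 1\otimes\alpha)\in M$, the cocycle condition on the remaining $B$-variables should then translate exactly into the statement that $\alpha\mapsto v(\alpha)$ is a $k$-derivation $B\to M$ (with the $B$-bimodule structure on $M$ induced by $\varepsilon$), so that $c(a\otimes b\otimes\alpha)=a\,v(\alpha)\,b$; note $A$ and $M$ being handled as symmetric/commuting under $\varepsilon$ lets me write $a v(\alpha) b$ unambiguously. This produces a surjection $Der_k(B,M)\to\ker(\Phi_2)$, $v\mapsto\widehat{\chi(v)}$.

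The third step is to compute the kernel of this surjection. A derivation $v$ maps to $0$ in $\ker(\Phi_2)$ iff the cochain $(a\otimes b\otimes\alpha)\mapsto a v(\alpha) b$ is a secondary coboundary $\delta_1^\varepsilon(g)$ for some $k$-linear $g:A\to M$; writing out $\delta_1^\varepsilon(g)(a\otimes b\otimes\alpha)=a\varepsilon(\alpha)g(b)-g(ab\varepsilon(\alpha))+g(a)b\varepsilon(\alpha)$ and comparing, together with the constraint (from $g$ itself having to be such that the $b_{i,j}=1$ part already vanished) that $g$ is a $k$-derivation of $A$ into $M$, I expect to find that $v=\varepsilon^*(g)=g\circ\varepsilon$. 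Hence the kernel of the surjection is exactly $\varepsilon^*(Der_k(A,M))$, giving the claimed isomorphism $\chi:Der_k(B,M)/\varepsilon^*(Der_k(A,M))\xrightarrow{\ \sim\ }\ker(\Phi_2)$ with $\chi(u)(a\otimes b\otimes\alpha)=au(\alpha)b$.

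The main obstacle I anticipate is the bookkeeping in the second step: verifying that the full three-variable secondary cocycle identity for a $c$ normalized by $c(-\otimes-\otimes 1)=0$ really does force $c(a\otimes b\otimes\alpha)=a v(\alpha) b$ \emph{and} forces $v$ to be a derivation, rather than something weaker. This requires carefully substituting special values ($a_i=1$, $b_{i,j}=1$ selectively) into $\delta_2^\varepsilon(c)=0$ to isolate the relevant relations, and making sure the commutativity of $A$, the symmetry of $M$, and the centrality of $\varepsilon(B)$ are used correctly so that no ordering ambiguities or extra terms survive. Once that normal form is established, steps one and three are comparatively routine comparisons of explicit coboundary formulas.
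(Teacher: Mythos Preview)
Your proposal is correct and follows essentially the same route as the paper: normalize a representative so that $c(a\otimes b\otimes 1)=0$, then plug special values (the paper uses $\alpha=\beta=1$ and $\beta=\gamma=1$) into $\delta_2^\varepsilon(c)=0$ to force $c(a\otimes b\otimes\alpha)=a\,v(\alpha)\,b$ with $v\in Der_k(B,M)$, and finally check that $\chi(v)=\delta_1^\varepsilon(w)$ forces $w\in Der_k(A,M)$ and $v=w\circ\varepsilon$. One minor remark: you do not need commutativity of $A$ or symmetry of $M$ beyond the standing hypotheses $\varepsilon(B)\subset\mathcal{Z}(A)$ and $\varepsilon(\alpha)m=m\varepsilon(\alpha)$; the expression $a\,u(\alpha)\,b$ is already unambiguous as a left-then-right action, and the cocycle identity in fact forces $v(\alpha)$ to commute with $A$ anyway, so your worry about ordering is moot.
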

\begin{proof}
Let $\sigma\in Z^2((A,B,\varepsilon); M)$ such that $\Phi_2(\widehat{\sigma})=0\in H^2(A,M)$. This means that there exists a $k$-linear map $u:A\to M$ such that $$\sigma(a\otimes b\otimes 1)=\delta_1(u)(a\otimes b)=au(b)-u(ab)+au(b).$$
We consider the element $\tau\in Z^2((A,B,\varepsilon); M)$, $\tau=\sigma-\delta_1^{\varepsilon}(u).$ Obviously we have that $\widehat{\sigma}=\widehat{\tau}\in H^2((A,B,\varepsilon); M)$, and $\tau\left(\displaystyle\otimes
\left(
\begin{array}{ccc}
a& 1\\
1 & b\\
\end{array}
\right)\right)=0.$

Since $\tau\in Z^2((A,B,\varepsilon); M)$, we have
\begin{eqnarray*}
&a\varepsilon(\alpha\beta)\tau\left(\displaystyle\otimes
\left(
\begin{array}{ccc}
b& \gamma\\
1 & c\\
\end{array}
\right)\right)-\tau\left(\displaystyle\otimes
\left(
\begin{array}{ccc}
ab\varepsilon(\alpha)& \beta\gamma\\
1 & c\\
\end{array}
\right)\right)
+\tau\left(\displaystyle\otimes
\left(
\begin{array}{ccc}
a& \alpha\beta\\
1 & bc\varepsilon(\gamma)\\
\end{array}
\right)\right)&\\
&-\tau\left(\displaystyle\otimes
\left(
\begin{array}{ccc}
a& \alpha\\
1 & b\\
\end{array}
\right)\right)c\varepsilon(\beta\gamma)=0.&
\end{eqnarray*}
When $\alpha=\beta=1$ we have:
$$a\tau\left(\displaystyle\otimes
\left(
\begin{array}{ccc}
b& \gamma\\
1 & c\\
\end{array}
\right)\right)=\tau\left(\displaystyle\otimes
\left(
\begin{array}{ccc}
ab& \gamma\\
1 & c\\
\end{array}
\right)\right),$$
and similarly when $\beta=\gamma=1$
$$\tau\left(\displaystyle\otimes
\left(
\begin{array}{ccc}
a& \alpha\\
1 & bc\\
\end{array}
\right)\right)=\tau\left(\displaystyle\otimes
\left(
\begin{array}{ccc}
a& \alpha\\
1 & b\\
\end{array}
\right)\right)c.$$
If we define $v:B\to M$ by $v(\alpha)=\tau\left(\displaystyle\otimes
\left(
\begin{array}{ccc}
1& \alpha\\
1 & 1\\
\end{array}
\right)\right)$ then we get:
$$\tau\left(\displaystyle\otimes
\left(
\begin{array}{ccc}
a& \alpha\\
1 & b\\
\end{array}
\right)\right)=a\tau\left(\displaystyle\otimes
\left(
\begin{array}{ccc}
1& \alpha\\
1 & 1\\
\end{array}
\right)\right)b=av(\alpha)b.$$
We will denote the $2$-cocycle $\tau$ by $\sigma_{v}$.
One can easily check that $v(\alpha\beta)=\varepsilon(\alpha)v(\beta)+v(\alpha)\varepsilon(\beta)$ (i.e. $v\in Der_k(B,M)$).

If $\sigma_v=\delta_1^{\varepsilon}(w)$ for some $w:A\to M$, then we must have
\begin{eqnarray}
av(\alpha)b=a\varepsilon(\alpha)w(b)-w(a\varepsilon(\alpha)b)+w(a)\varepsilon(\alpha)b. \label{w1}
\end{eqnarray}
For $\alpha=1$ we get that $w(ab)=aw(b)+w(a)b$ and so $w\in Der_k(A,M)$. If  in  equation (\ref{w1}) we take $a=b=1$ then we have $$v(\alpha)=w(\varepsilon(\alpha)),$$
which concludes our proof.
\end{proof}

Next, we want to show that $\Phi_2$ need not be one to one. For this let $A=M=k[X],$ $f(X)\in k[X]$, $B=k[f]$, and let $\varepsilon:B\to A$, $\varepsilon(f)=f(X)$.

For $q(X)\in k[X]$ we consider $\sigma_{q(X)}:A\otimes A\otimes B\to A$ defined by $$\sigma_{q(X)}(P(X)\otimes Q(X)\otimes \alpha(f(X)))=q(X)P(X)Q(X)\alpha'(f(X)).$$
One can see that $\delta^{\varepsilon}_2(\sigma_{q(X)})=0.$  Since $H^2(A, A)=0$ we have that $H^2((A, B, \varepsilon); M)=ker(\Phi_2)$, so every $\hat{\sigma}\in H^2((A, B, \varepsilon); M)$ is of the form $\hat{\sigma}(a\otimes b\otimes\alpha)=av(\alpha)b$, for $v\in Der_k(B, M)$. With this remark we can prove the following result:

\begin{proposition}
Let $\widehat{\sigma} \in
H^2((A, B,\varepsilon);M)$ then there exists $q(X)\in k[X]$ such that $\widehat{\sigma}=\widehat{\sigma_{q(X)}}$. Moreover, if $p(X)$, $q(X)\in k[X]$ then  $\widehat{\sigma_{q(X)}}= \widehat{\sigma_{p(X)}}\in H^2((A, B,\varepsilon);M)$  if and only if $\widehat{p(X)}=\widehat{q(X)}\in k[X]/<f'(X)>$.
\label{prop4}
\end{proposition}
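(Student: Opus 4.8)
The plan is to combine the description of $\ker(\Phi_2)$ from Proposition \ref{prop6} with an explicit analysis of the derivations of $B=k[f]$ into $M=k[X]$. First I would invoke the remark preceding the statement: since $H^2(A,A)=0$, the map $\chi$ of Proposition \ref{prop6} gives an isomorphism $Der_k(B,M)/\varepsilon^*(Der_k(A,M)) \xrightarrow{\ \sim\ } H^2((A,B,\varepsilon);M)$, so every class $\widehat{\sigma}$ equals $\widehat{\sigma_v}$ for a unique $v \in Der_k(B,M)$ modulo the image of $\varepsilon^*$, where $\sigma_v(a\otimes b\otimes\alpha)=av(\alpha)b$.

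Next I would identify $Der_k(B,M)$ concretely. A $k$-derivation $v\colon k[f]\to k[X]$ is determined by $v(f)\in k[X]$ via the Leibniz rule, since $B=k[f]$ is generated by $f$; writing $v(f)=q(X)$ we have $v(\alpha(f(X)))=q(X)\,\alpha'(f(X))$, so $\sigma_v=\sigma_{q(X)}$ in the notation introduced before the proposition. This already proves the first assertion: for any $\widehat{\sigma}$ there is a $q(X)\in k[X]$ with $\widehat{\sigma}=\widehat{\sigma_{q(X)}}$. For the second assertion I would compute $\varepsilon^*(Der_k(A,M))$: a derivation $w\colon k[X]\to k[X]$ is determined by $w(X)=r(X)$, giving $w(P(X))=r(X)P'(X)$, and then $\varepsilon^*(w) = w\circ\varepsilon$ sends $f\in B$ to $w(f(X)) = r(X) f'(X)$. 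Hence, under the identification $v\leftrightarrow v(f)=q(X)$, the subspace $\varepsilon^*(Der_k(A,M))\subseteq Der_k(B,M)$ corresponds exactly to the ideal $\langle f'(X)\rangle \subseteq k[X]$.

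Putting these together: $\widehat{\sigma_{q(X)}}=\widehat{\sigma_{p(X)}}$ in $H^2((A,B,\varepsilon);M)$ iff $\sigma_{q(X)}-\sigma_{p(X)}=\sigma_{q(X)-p(X)}$ is a coboundary, iff $q(X)-p(X)$ lies in $\varepsilon^*(Der_k(A,M))$, iff $q(X)-p(X)\in\langle f'(X)\rangle$, i.e. $\widehat{p(X)}=\widehat{q(X)}$ in $k[X]/\langle f'(X)\rangle$. I would double-check the coboundary direction against equation (\ref{w1}) in the proof of Proposition \ref{prop6}: if $\sigma_{q(X)}=\delta_1^\varepsilon(w)$ then $w\in Der_k(A,M)$ and $q(X)=v(f)=w(\varepsilon(f))=w(f(X))$, which is precisely $r(X)f'(X)$ for $w(X)=r(X)$ — consistent with the claim.

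The only real subtlety — and the step I would be most careful about — is making sure that $\varepsilon^*$ is injective on $Der_k(A,M)$ so that the correspondence $Der_k(A,M)\leftrightarrow\langle f'(X)\rangle$ is genuinely a linear isomorphism onto the ideal (if $f$ is non-constant then $f'(X)\ne 0$ and multiplication by $f'(X)$ is injective on $k[X]$, so this is fine; the degenerate case $f\in k$ makes $B=k$ and the statement reduces to the trivial $H^2=0$). Everything else is the routine bookkeeping of Leibniz rules for derivations out of a polynomial ring in one generator, so no heavy computation is needed beyond what is already recorded in Proposition \ref{prop6}.
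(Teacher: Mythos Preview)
Your proposal is correct and follows essentially the same route as the paper: both arguments invoke Proposition \ref{prop6}, identify $Der_k(B,M)$ with $k[X]$ via $v\mapsto v(f)=q(X)$, and compute that the image of $\varepsilon^*$ corresponds to $\langle f'(X)\rangle$ by writing $t(\varepsilon(f))=r(X)f'(X)$ for $t(X)=r(X)$. The one extraneous worry in your last paragraph---injectivity of $\varepsilon^*$---is not needed, since only the \emph{image} of $\varepsilon^*$ inside $Der_k(B,M)\cong k[X]$ matters for the quotient, and that image is $\langle f'(X)\rangle$ regardless of whether multiplication by $f'(X)$ is injective.
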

\begin{proof}  On $M=k[X]$ we have the $k[f]$-bimodule structure determined by $f\cdot P(X)=f(X)P(X)$. Let $u\in Der_k(B,M)$ and take $q(X)=u(f)$. Then $u(\Lambda(f))=\Lambda'(f(X))q(X)$.

Let $t\in Der_k(A,M)$, and take $t(X)=r(X)\in k[X]$. We have that $t(P(X))=P'(X)r(X)$ and so $t(\varepsilon(\Lambda(f)))=t(\Lambda(f(X)))=\Lambda'(f(X))f'(X)r(X)$. Now the result follows directly from Proposition \ref{prop6}.
\end{proof}

\begin{remark}If $f(X)\in k[X]$ has the property that the ideal generated by $f'(X)$ is not trivial then the map $\Phi$ is not one to one. Take for example $n\geq 2$ and $f(X)=X^n$ such that $n$ does not divide the characteristic of $k$. Then  we have that   $dim_k(H^2((k[X],k[X^n],\varepsilon);k[X]))= n-1$. %It seems reasonable to believe that this is actually an equality, but at this point we don't have a proof for that.
\end{remark}
\begin{remark} Using the results from \cite{sta}, one can notice that the element  $\widehat{\sigma_{p(X)}}\in H^2((A,B,\varepsilon);M)$ corresponds to the $B$-algebra structure on $A[[t]]$ defined by the  morphism $\varepsilon_t: k[f(X)]\to k[X][[t]]$ where  $\varepsilon_t(f(X))=f(X)+tp(X)$.
\end{remark}

More generally, consider $A=M=k[X,Y]$.
Let  $f(X,Y)$ and $g(X,Y)\in A=k[X,Y]$, take  $B=k[f,g]$ and define $\varepsilon:k[f,g]\to k[X,Y]$ determined by $\varepsilon(f)=f(X,Y)$ and  $\varepsilon(g)=g(X,Y)$. For any $a(X,Y)$ and $b(X,Y)\in k[X,Y]$ we can define $\sigma_{a,b}:A\otimes A\otimes B\to A$ by
\begin{eqnarray*}
&\sigma_{a,b}(P(X,Y)\otimes Q(X,Y)\otimes \Lambda(f,g))=&\\
&P(X,Y)Q(X,Y)(\frac{\partial \Lambda}{\partial f}(f(X,Y),g(X,Y))a(X,Y)+\frac{\partial\Lambda}{\partial g}(f(X,Y),g(X,Y))b(X,Y))&
\end{eqnarray*}
for all $P(X,Y)$, $Q(X,Y)\in k[X,Y]$ and $\Lambda(f,g)\in k[f,g]$.

\begin{proposition}
Let $\widehat{\sigma} \in ker(\Phi_2:
H^2((A, B,\varepsilon);M)\to H^2(A,M))$ then there exist $a(X,Y)$, $b(X,Y)\in k[X,Y]$ such that $\widehat{\sigma}=\widehat{\sigma_{a,b}}$. Moreover,  $\widehat{\sigma_{a,b}}=\widehat{\sigma_{c,d}}\in H^2((A,B,\varepsilon),A)$ if and only if there exist $v(X,Y)$ and $w(X,Y)\in k[X,Y]$ such that
\begin{eqnarray*}
\left(
\begin{array}{cccccccc}
a(X,Y)-c(X,Y)\\
b(X,Y)-d(X,Y)
\end{array}
\right)=
\left(
\begin{array}{cccccccc}
\frac{\partial f}{\partial X}(X,Y)& \frac{\partial f}{\partial Y}(X,Y)\\
\frac{\partial g}{\partial X}(X,Y)&\frac{\partial g}{\partial Y}(X,Y)
\end{array}
\right)
\left(
\begin{array}{cccccccc}
v(X,Y)\\
w(X,Y)
\end{array}
\right)
\end{eqnarray*}
\label{prop5}
\end{proposition}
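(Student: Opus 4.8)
The plan is to run the argument of Proposition \ref{prop4} one dimension up, the only genuinely new ingredient being an identification of the image of $\varepsilon^*:Der_k(A,M)\to Der_k(B,M)$ with the column space of the Jacobian matrix of $(f,g)$ with respect to $(X,Y)$. The first step is to recognize $\sigma_{a,b}$ as one of the cochains produced by Proposition \ref{prop6}. Namely, let $u_{a,b}\in Der_k(B,M)$ be the $k$-derivation of $B=k[f,g]$ (with $M=k[X,Y]$ carrying the $B$-bimodule structure induced by $\varepsilon$) determined by $u_{a,b}(f)=a(X,Y)$ and $u_{a,b}(g)=b(X,Y)$. Iterating the Leibniz rule gives $u_{a,b}(\Lambda(f,g))=\frac{\partial\Lambda}{\partial f}(f(X,Y),g(X,Y))a(X,Y)+\frac{\partial\Lambda}{\partial g}(f(X,Y),g(X,Y))b(X,Y)$, so that, $A$ being commutative, the cochain $\chi(u_{a,b})$ of Proposition \ref{prop6}, namely $P\otimes Q\otimes\Lambda\mapsto Pu_{a,b}(\Lambda)Q$, is exactly $\sigma_{a,b}$. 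In particular $\sigma_{a,b}$ is a $2$-cocycle and $\Phi_2(\widehat{\sigma_{a,b}})=0$.

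For the first assertion, let $\widehat{\sigma}\in ker(\Phi_2)$. By Proposition \ref{prop6} there is $u\in Der_k(B,M)$ with $\widehat{\sigma}=\chi(u)$, and a $k$-derivation of $k[f,g]$ is determined by its values on the generators $f$ and $g$; so setting $a(X,Y)=u(f)$ and $b(X,Y)=u(g)$ we get $u=u_{a,b}$, hence $\widehat{\sigma}=\chi(u_{a,b})=\widehat{\sigma_{a,b}}$.

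For the second assertion, $\chi$ being injective, $\widehat{\sigma_{a,b}}=\widehat{\sigma_{c,d}}$ is equivalent to $u_{a,b}-u_{c,d}=u_{a-c,\,b-d}$ lying in $\varepsilon^*(Der_k(A,M))$. Since $Der_k(A,M)=Der_k(k[X,Y],k[X,Y])$ is free of rank $2$ on $\partial/\partial X$ and $\partial/\partial Y$, a general element is $t=v(X,Y)\partial/\partial X+w(X,Y)\partial/\partial Y$ with $v,w\in k[X,Y]$, and then $\varepsilon^*(t)(f)=t(f(X,Y))=\frac{\partial f}{\partial X}v+\frac{\partial f}{\partial Y}w$ and $\varepsilon^*(t)(g)=t(g(X,Y))=\frac{\partial g}{\partial X}v+\frac{\partial g}{\partial Y}w$. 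Hence $u_{a-c,\,b-d}=\varepsilon^*(t)$ for some $t$ precisely when, evaluating on $f$ and $g$, the pair $(a-c,\,b-d)$ lies in the column space of the Jacobian matrix of $(f,g)$; this is the displayed matrix identity, so we are done.

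The step that calls for the most care is not a computation but a well-posedness point: the chain of identifications ``cocycle in $ker(\Phi_2)$ $\leftrightarrow$ class of a $k$-derivation of $B$ $\leftrightarrow$ pair $(a,b)=(u(f),u(g))$'' is a genuine bijection precisely when $f$ and $g$ are algebraically independent; otherwise $\frac{\partial\Lambda}{\partial f}$ in the definition of $\sigma_{a,b}$ must be read through a chosen presentation and only pairs $(a,b)$ compatible with the defining relation of $k[f,g]$ are allowed. Granting this, and the identification of the image of $\varepsilon^*$ with the column space of the Jacobian matrix, everything reduces to a direct transcription of the proof of Proposition \ref{prop4} by way of Proposition \ref{prop6}.
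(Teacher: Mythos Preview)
Your proof is correct and follows essentially the same route as the paper: both invoke Proposition~\ref{prop6} to identify $\ker(\Phi_2)$ with $Der_k(B,M)/\varepsilon^*(Der_k(A,M))$, then parametrize derivations of $B=k[f,g]$ by pairs $(a,b)$ and identify the image of $\varepsilon^*$ with the column space of the Jacobian matrix. Your closing remark on the algebraic independence of $f$ and $g$ is a welcome clarification that the paper leaves implicit.
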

\begin{proof} The proof is similar with that of Proposition \ref{prop4}. On $M=k[X,Y]$ we have the $k[f,g]$-bimodule structure determined by $f\cdot P(X,Y)=f(X,Y)P(X,Y)$ and $g\cdot P(X,Y)=g(X,Y)P(X,Y)$. Let $u\in Der_k(B,M)$ and take $a(X,Y)=u(f)$ and $b(X,Y)=u(g)$, then
\begin{eqnarray*}
&u(\Lambda(f,g))=\frac{\partial \Lambda}{\partial f}(f(X,Y),g(X,Y))a(X,Y)+\frac{\partial\Lambda}{\partial g}(f(X,Y),g(X,Y))b(X,Y)).&
\end{eqnarray*}
Let $t\in Der_k(A,M)$, and take $t(X)=v(X,Y)$ and $t(Y)=w(X,Y)\in k[X,Y]$. We have that $t(P(X,Y))= \frac{\partial P}{\partial X}(X,Y)u(X,Y)+\frac{\partial P}{\partial Y}(X,Y)v(X,Y)$ and so
\begin{eqnarray*}
&t(\varepsilon(f))=t(f(X,Y))= \frac{\partial f}{\partial X}(X,Y)v(X,Y)+\frac{\partial f}{\partial Y}(X,Y)w(X,Y),&\\
&t(\varepsilon(g))=t(g(X,Y))= \frac{\partial g}{\partial X}(X,Y)v(X,Y)+\frac{\partial g}{\partial Y}(X,Y)w(X,Y).&
\end{eqnarray*}
 Now the result follows directly from Proposition \ref{prop6}.
\end{proof}

\begin{remark}
\label{Jac} A similar statement can be proved if we take  $A=k[X_1,...,X_n]$, $B=k[f_1,...,f_n]$ and $\varepsilon(f_i)=f_i(X_1,...X_n)\in k[X_1,...,X_n]$.
\end{remark}

\begin{remark} In Proposition \ref{prop5} we  proved that the subspace $ker(\Phi_2)$ of  $H^2((A,B,\varepsilon); A)$ is isomorphic with $(k[X,Y]\oplus k[X,Y])/Image(J(f,g))$, where $J(f,g):k[X,Y]\oplus k[X,Y]\to k[X,Y]\oplus k[X,Y]$ is determined by the Jacobian matrix associated to the pair $(f(X,Y),g(X,Y))$.

When $k$ is a field with $char(k)=p$, $f(X,Y)=X+X^p$ and $g(X,Y)=Y+Y^p$ then one can see that $Image(J(f,g))=k[X,Y]\oplus k[X,Y]$ and $\varepsilon$ is not onto. It is possible to have $ker(\Phi)=0$ without the map $\varepsilon$ being surjective. However, when $char(k)=0$ we can give the following reformulation, for polynomials in two variables, of the Jacobian problem stated in \cite{W} ($n$ variables if we consider \ref{Jac}).
\end{remark}
\begin{conjecture} Let $k$ be a field, $char(k)=0$. Take  $A=k[X,Y]$, $B=k[f,g]$, $\varepsilon(f)=f(X,Y)$ and $\varepsilon(g)=g(X,Y)$. If $\Phi_2 :H^2((A,B,\varepsilon);A)\to H^2(A,A)$ is one to one, then $\varepsilon$ is surjective.
\end{conjecture}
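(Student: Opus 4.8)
The plan is to translate the injectivity of $\Phi_2$ into a purely algebraic statement about the Jacobian and then invoke the known equivalence between the (two-variable) Jacobian Conjecture and this statement. By Proposition~\ref{prop5} we know that $\ker(\Phi_2)$ is isomorphic to the cokernel of the Jacobian map $J(f,g):k[X,Y]^{\oplus 2}\to k[X,Y]^{\oplus 2}$, $J(f,g)=\left(\begin{array}{cc}\frac{\partial f}{\partial X}&\frac{\partial f}{\partial Y}\\ \frac{\partial g}{\partial X}&\frac{\partial g}{\partial Y}\end{array}\right)$. Hence $\Phi_2$ is one-to-one if and only if $\operatorname{Image}(J(f,g))=k[X,Y]^{\oplus 2}$, i.e. if and only if the matrix $J(f,g)$, regarded as acting on the free $k[X,Y]$-module of rank $2$, is surjective.

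First I would record the elementary observation that if $J(f,g)$ is surjective over the ring $k[X,Y]$ then in particular the determinant $\det J(f,g)$ is a unit in $k[X,Y]$, hence a nonzero constant: indeed a surjective endomorphism of a finitely generated module over a commutative ring is an isomorphism, so $J(f,g)$ is invertible over $k[X,Y]$, forcing $\det J(f,g)\in k^{\times}$. Thus the hypothesis ``$\Phi_2$ injective'' implies the Jacobian hypothesis $\det J(f,g)\in k^\times$. Second, conversely, one shows that surjectivity of $J(f,g)$ over $k[X,Y]$ is in fact \emph{equivalent} to $\varepsilon$ being surjective: if $\varepsilon\colon k[f,g]\to k[X,Y]$ is onto then $X,Y\in k[f,g]$, so there are polynomials expressing $X$ and $Y$ in terms of $f,g$, and differentiating these expressions by the chain rule exhibits the standard basis vectors $(1,0),(0,1)$ (the images of $X$ and $Y$ under a derivation) as $k[X,Y]$-combinations of the columns of $J(f,g)$, giving surjectivity; the reverse implication is exactly the content of the two-variable Jacobian Conjecture as formulated in \cite{W}.

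Therefore the statement ``$\Phi_2$ injective $\Rightarrow$ $\varepsilon$ surjective'' is literally the assertion that ``$J(f,g)$ surjective over $k[X,Y]$ $\Rightarrow$ $\varepsilon$ surjective,'' and after the reduction above (surjectivity of $J$ forces $\det J\in k^\times$) this is implied by the classical Jacobian Conjecture for two variables over a field of characteristic zero. So the proof structure is: (i) apply Proposition~\ref{prop5} to identify $\ker(\Phi_2)$ with $\operatorname{coker} J(f,g)$; (ii) note injectivity of $\Phi_2$ $\iff$ $J(f,g)$ surjective $\iff$ (using finite generation) $J(f,g)$ invertible over $k[X,Y]$; (iii) deduce $\det J(f,g)$ is a nonzero constant; (iv) invoke the Keller/Jacobian Conjecture of \cite{W} to conclude $k[f(X,Y),g(X,Y)]=k[X,Y]$, i.e. $\varepsilon$ is onto.

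The main obstacle is that step (iv) is not something one can actually prove here: it \emph{is} the Jacobian Conjecture, which remains open. So this ``conjecture'' is genuinely a conjecture, and the honest content of the section is the equivalence established in steps (i)--(iii): the cohomological condition ``$\Phi_2$ is injective'' is a faithful reformulation of the Jacobian hypothesis, so that proving the conjecture above is equivalent to proving the two-variable Jacobian Conjecture (and, via Remark~\ref{Jac}, the $n$-variable case). The only genuine mathematical work is the bookkeeping in steps (i)--(iii) — verifying that the isomorphism of Proposition~\ref{prop5} really does carry injectivity of $\Phi_2$ to surjectivity of the Jacobian module map, and that a surjective $k[X,Y]$-linear endomorphism of $k[X,Y]^{\oplus2}$ has unit determinant — all of which is routine commutative algebra.
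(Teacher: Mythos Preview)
Your analysis is correct and matches the paper's treatment. The paper does not prove this statement: it is stated as a \emph{conjecture}, explicitly described in the preceding remark as a ``reformulation \ldots\ of the Jacobian problem stated in \cite{W}.'' The paper's justification for this reformulation is exactly the content of Proposition~\ref{prop5} together with the observation (in the remark) that $\ker(\Phi_2)\cong\operatorname{coker}(J(f,g))$. Your steps (i)--(iii) make explicit what the paper leaves to the reader, including the clean commutative-algebra point that surjectivity of $J(f,g)$ on $k[X,Y]^{\oplus 2}$ forces $\det J(f,g)\in k^{\times}$; and you correctly identify that step (iv) is the open Jacobian Conjecture itself, so no proof is possible here.
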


\begin{remark} Notice that from Proposition \ref{prop6} we have an exact sequence:
$$H^1(A,M)\stackrel{\varepsilon^*}{\rightarrow}H^1(B,M)\stackrel{\chi}{\rightarrow} H^2((A,B,\varepsilon);M)\stackrel{\Phi_2}{\rightarrow}H^2(A,M).$$
It is reasonable to belive that this can be extended to a long exact sequence. Also, one can ask if the secondary cohomology can be seen as a derived functor (Ext functor) in an appropriate category. We are planing to investigate these problems in a follow up paper.
\end{remark}

%%%%%%%%%%%%%%%
%\section*{Acknowledgment}

%%%%%%%%%%%%%%%%%%%%%%%%%%%%%
%%%%%%%%%%%%%%%%%%%%%%%%%%%%%%%%%%%%%%%
%%%%%%%%

%%%%%%%%%%

\bibliographystyle{amsalpha}

\end{document}